\title[ ]{ANDERSON LOCALIZATION FOR JACOBI MATRICES ASSOCIATED WITH HIGH-DIMENSIONAL SKEW SHIFTS }
\author{Jia Shi}
\address[Jia Shi]{School of Mathematical Sciences,
Fudan University,
Shanghai 200433, China} \email{15110180007@fudan.edu.cn}
\author{Xiaoping Yuan}
\address[Xiaoping Yuan]{School of Mathematical Sciences,
Fudan University,
Shanghai 200433, China} \email{xpyuan@fudan.edu.cn}
\keywords{Anderson localization, Jacobi matrices, skew shifts.}
\theoremstyle{plain}
\newtheorem{thm}{Theorem}[section]
 \newtheorem{cor}[thm]{Corollary}
 \newtheorem{lem}[thm]{Lemma}
 \newtheorem{prop}[thm]{Proposition}
 \newtheorem{rem}[thm]{Remark}
 \numberwithin{equation}{section}
\begin{document}


\begin{abstract}
In this paper, we establish Anderson localization for a
class of Jacobi matrices associated with skew shifts on $\mathbb{T}^{d}$, $d\geq3$.
\end{abstract}

\maketitle
\section{Introduction and main result}

Over the past thirty years, there are many papers on the topic of Anderson localization for
lattice Schr\"odinger operators
\begin{equation}\label{1}
H=v_n\delta_{nn'}+\Delta,
\end{equation}
where $v_n$ is a quasi-periodic potential, $\Delta$ is the lattice Laplacian on $\mathbb{Z}$.
Anderson localization means that $H$ has pure point spectrum with exponentially localized states $\varphi=(\varphi_{n})_{n\in\mathbb{Z}}, $
\begin{equation}\label{2}
|\varphi_{n}|<e^{-c| n|},\quad | n|\rightarrow\infty.
\end{equation}

We may associate the potential $v_n$ to a dynamical system $T$ as follows:
\begin{equation}\label{3}
v_n=\lambda v(T^nx),
\end{equation}
where $v$ is real analytic on $\mathbb{T}^{d}$ and $T$ is a shift on $\mathbb{T}^{d}$:
\begin{equation}\label{4}
Tx=x+\omega.
\end{equation}
Fix $x=x_0$, if $\lambda$ is large and $\omega$ outside set of small measure, $H$ will satisfy Anderson localization.

The proof of Anderson localization is based on multi-scale analysis and semi-algebraic set theory. In this line,
 Bourgain and Goldstein \cite{BG} proved Anderson localization for Schr\"odinger operators (\ref{1}) with  help of  fundamental matrix and Lyapounov exponent.
By multi-scale method, Bourgain, Goldstein and Schlag \cite{BGS2}  proved Anderson localization for Schr\"odinger operators on $\mathbb{Z}^{2}$
\begin{equation}\label{5}
H(\omega_1,\omega_2;\theta_1,\theta_2)=\lambda v(\theta_1+n_1\omega_1,\theta_2+n_2\omega_2)+\Delta.
\end{equation}
Later, Bourgain \cite{B07} proved Anderson localization for quasi-periodic lattice {S}chr\"odinger operators on $\mathbb{Z}^{d}$, $d$ arbitrary.
Recently, using more elaborate semi-algebraic arguments, Bourgain and Kachkovskiy \cite{BK}
proved Anderson localization for two interacting quasi-periodic particles.

More generally, we can study the long range model
\begin{equation}\label{6}
H=v(x+n\omega)\delta_{nn'}+S_\phi,
\end{equation}
with $\Delta$ replaced by a Toeplitz operator
\begin{equation}\label{7}
S_\phi(n,n')=\hat{\phi}(n-n'),
\end{equation}
where $\phi$ is real analytic.
Bourgain \cite{B05} proved Anderson localization for the long-range quasi-periodic operators (\ref{6}).
Note that in this case, we cannot use the fundamental matrix formalism as (\ref{1}).
Bourgain's method in \cite{B05} also permits us to establish Anderson localization for band Schr\"odinger operators \cite{BJ}
\begin{equation}\label{8}
H_{(n,s),(n',s')}(\omega,\theta)=\lambda v_s(\theta+n\omega)\delta_{nn'}\delta_{ss'}+\Delta,
\end{equation}
where $\{v_s|1\leq s\leq b\}$ are real analytic.
Recently, Jian, Shi and Yuan \cite{JSY} proved Anderson localization for quasi-periodic block operators with long-range interactions.

If the transformation $T$ is a skew shift on $\mathbb{T}^{2}$:
\begin{equation}\label{9}
T(x_{1},x_{2})=(x_{1}+x_{2},x_{2}+\omega),
\end{equation}
using transfer matrix and Lyapounov exponent, Bourgain, Goldstein and Schlag \cite{BGS1} proved Anderson localization for
\begin{equation}\label{10}
H=\lambda v(T^nx)+\Delta.
\end{equation}

In order to study quantum kicked rotor equation
\begin{equation}\label{11}
i\frac{\partial\Psi(t,x)}{\partial t}=a\frac{\partial^{2}\Psi(t,x)}{\partial x^{2}}+ib\frac{\partial\Psi(t,x)}{\partial x}+V(t,x)\Psi(t,x),\quad x\in\mathbb{T},
\end{equation}
where
\begin{equation}\label{12}
V(t,x)=\kappa\left(\sum_{n\in\mathbb{Z}}\delta(t-n)\right)\cos(2\pi x),
\end{equation}
using multi-scale method,
Bourgain \cite{B02} proved Anderson localization for the operator
\begin{equation}\label{13}
W=\phi_{m-n}(T^mx),
\end{equation}
where $\phi_k$ are trigonometric polynomials and $T$ is a skew shift on $\mathbb{T}^{2}$.

However, there are few results on high-dimensional skew shifts.
When $d\geq 3$, the skew shift
$T:\mathbb{T}^{d}\rightarrow\mathbb{T}^{d}$ is given by
\begin{equation}
(Tx)_{i}=x_{i}+x_{i+1},\quad 1\leq i\leq d-1,
\end{equation}
\begin{equation}
(Tx)_{d}=x_{d}+\omega,\quad x=(x_1,\ldots, x_d).
\end{equation}
In \cite{K}, Kr\"{u}ger proved positivity of Lyapounov exponents for the Schr\"odinger operator
\begin{equation}
H=\lambda f((T^nx)_{1})\delta_{nn'}+\Delta,
\end{equation}
where $T$ is a skew shift on $\mathbb{T}^{d}$, $d$ is sufficiently large, $f$ is a real, nonconstant function on $\mathbb{T}$.

In this paper, we generalized Bourgain's result on skew shifts on $\mathbb T^2$ \cite{B02} to higher dimensional ones on $\mathbb{T}^{d}, d\geq3$.
More precisely,
we consider matrices $(A_{mn}(x))_{m,n\in\mathbb{Z}}$, $x\in\mathbb{T}^{d}$ associated with a skew shift $T:\mathbb{T}^{d}\rightarrow\mathbb{T}^{d}$ of the form
\begin{equation}\label{a1}
A_{mm}(x)=v(T^{m}x),
\end{equation}
\begin{equation}\label{a2}
A_{mn}(x)=\phi_{m-n}(T^{m}x)+\overline{\phi_{n-m}(T^{n}x)},\quad m\neq n,
\end{equation}
where
\begin{equation}\label{a3}
\mbox{$v$ is a real, nonconstant, trigonometric polynomial},
\end{equation}
\begin{equation}\label{a4}
\mbox{$\phi_{k}$ is a trigonometric polynomial of degree $<|k|^{C_{1}}$},
\end{equation}
\begin{equation}\label{a5}
\|\phi_{k}\|_{\infty}<\gamma e^{-|k|}.
\end{equation}

We will prove the following result:
\begin{thm}
Consider a lattice operator $H_{\omega}(x)$ associated to the skew shift $T=T_{\omega}$ acting on $\mathbb{T}^{d},\ d\geq3$, of the form (\ref{a1})-(\ref{a5}).
Assume $\omega \in DC$ (diophantine condition),
\begin{equation}\label{a6}
\|k\omega\|>c| k|^{-2},\quad \forall k\in\mathbb{Z}\setminus\{0\}.
\end{equation}
Fix $x_0\in\mathbb{T}^{d}$. Then for almost all $\omega \in DC$  and $\gamma$ taken sufficiently small in (\ref{a5}), $H_{\omega}(x_0)$ satisfies Anderson localization.
\end{thm}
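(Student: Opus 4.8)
The plan is to follow the multi-scale analysis / semi-algebraic scheme that Bourgain developed for the long-range quasi-periodic model and for the skew shift on $\mathbb{T}^2$, adapting each step to the $d$-dimensional skew shift $T$. Since $H$ has long-range off-diagonal terms (\ref{a2}), the transfer-matrix / Lyapunov exponent formalism is unavailable, so everything must be driven by resolvent estimates on finite boxes $\Lambda\subset\mathbb{Z}$ together with quantitative sublevel-set bounds for the relevant semi-algebraic sets. The two analytic inputs one needs are: (i) a large-deviation estimate saying that for most phases $x$ and most frequencies $\omega$, the restricted resolvent $(H_\Lambda(x)-E)^{-1}$ on an interval $\Lambda$ of length $N$ exists with norm $\le e^{N^{\delta}}$ and off-diagonal decay $e^{-c|m-n|}$; and (ii) an elimination-of-double-resonances step, at each scale, controlling the set of $x$ for which two disjoint boxes are simultaneously singular. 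From these the localization of eigenfunctions of $H_\omega(x_0)$ follows by the standard paving/Shnol-type argument.

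The key steps, in order, are as follows. \emph{Step 1: initial scale.} For $\gamma$ small in (\ref{a5}) the off-diagonal part of $H$ has norm $O(\gamma)$, so $H$ is a small perturbation of the diagonal multiplication operator $v(T^mx)$. Because $v$ is a nonconstant real trigonometric polynomial and $T$ is (for a.e.\ $\omega\in DC$) uniquely ergodic with quantitative equidistribution of orbits, $\{v(T^mx_0)\}$ is ``well separated'' away from any fixed $E$ for most indices; this yields the base-case resolvent bound on boxes of size $N_0$, with the bad set of $\omega$ (and of $E$) semi-algebraic and small. \emph{Step 2: equidistribution / arithmetic input for the skew shift.} The new ingredient relative to $\mathbb{T}^2$ is a quantitative lower bound on $\|p\cdot(T^nx)+q\|$-type quantities and on the density of orbit pieces of length $N$ of the $d$-dimensional skew shift; one shows that the iterates $T^nx$ are (for $\omega\in DC$ and $x$ outside a small set) spread out on $\mathbb{T}^d$ on polynomial scales, using the explicit polynomial form of $(T^nx)_i$ in $n$ and the Diophantine condition (\ref{a6}). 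This replaces the two-dimensional van der Corput / Weyl-sum estimates of \cite{BGS1,B02}. \emph{Step 3: inductive step via semi-algebraic sets.} Assuming the resolvent estimate at scale $N$, one covers a box of size $N'\sim N^{C}$ (or $e^{N^\delta}$) by translates of size-$N$ boxes, uses the resolvent identity and the Schur/Neumann-series argument to propagate the bound, controlling the number of singular sub-boxes by a Cartan-type estimate on the determinant $\det(H_\Lambda-E)$ as an analytic function of $x$, and eliminating double resonances by a semi-algebraic-set complexity bound (Yomdin--Gromov / Bourgain) intersected with the orbit $\{T^nx_0\}$. \emph{Step 4: from uniform resolvent bounds to Anderson localization.} A generalized eigenfunction $\varphi$ of $H_\omega(x_0)$ with energy $E$ in the spectrum must have a ``regular'' point far out; combining this with the multi-scale resolvent bound along the orbit $T^nx_0$ forces exponential decay (\ref{2}), hence pure point spectrum with exponentially localized eigenfunctions, for a.e.\ $\omega\in DC$.

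The main obstacle I expect is \emph{Step 2 together with its use inside Step 3}: the double-resonance elimination requires knowing that the finitely many ``bad'' energies / phases produced by one box are not revisited by a far-away box along the \emph{same} orbit $\{T^nx_0\}$, and for the skew shift the relevant sets are defined using the polynomial-in-$n$ expressions $(T^nx)_i$, whose degrees grow with $d$; controlling the semi-algebraic complexity (number of connected components, and sublevel-set measure) uniformly in the scale, while retaining a good Diophantine-type lower bound that survives composition with high-degree polynomials, is the delicate point. One has to choose the Diophantine exponent in (\ref{a6}) and the degree bound $C_1$ in (\ref{a4}) compatibly, and show the exceptional set of $\omega$ accumulated over all scales still has measure zero. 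The off-diagonal exponential decay (\ref{a5}) with small $\gamma$ is what keeps the perturbation under control and lets the Neumann series close at every scale; the analyticity of $v$ and $\phi_k$ (trigonometric polynomials) is what makes the determinants entire and the Cartan / resultant estimates applicable. The remaining steps are, modulo bookkeeping, the by-now-standard Bourgain machinery, so the write-up should isolate the skew-shift equidistribution lemma as the technical heart and then invoke the general scheme.
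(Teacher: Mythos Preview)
Your outline follows the correct general architecture --- multi-scale resolvent bounds plus semi-algebraic elimination plus a Shnol-type endgame --- and you correctly flag the skew-shift equidistribution as a needed ingredient. However, two aspects of your plan diverge from what the paper actually does, and one of them hides the real difficulty.

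First, in your Step~3 you propose to control the number of singular sub-boxes by a \emph{Cartan-type estimate on $\det(H_\Lambda-E)$}. The paper does not use Cartan's lemma at all. Instead, the inductive Green's-function bound (Proposition~\ref{p3.2}) is obtained by feeding the equidistribution Lemma~\ref{l2.1} directly into Bourgain's abstract multi-scale Lemma~\ref{l3.1}: the ``bad'' set $\Omega$ at scale $M$ is a semi-algebraic set of degree $M^{C}$ and measure $<e^{-M^{\delta^3}}$, so by Corollary~\ref{c4.4} it is covered by $M^{C}\epsilon^{-(d-1)}$ balls of radius $\epsilon$, and then Lemma~\ref{l2.1} bounds the number of orbit points in each ball. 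The Weyl-sum computation behind Lemma~\ref{l2.1} is exactly the higher-$d$ replacement for the van der Corput input you mention, and it is comparatively routine (iterated squaring / Weyl differencing, using the Diophantine approximant $q\in(L^{1/2},L)$).

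Second --- and this is where your write-up underweights the work --- the genuine technical heart of the paper is \emph{not} the equidistribution lemma but Lemma~\ref{l4.6}, the $\omega$-elimination step. In the localization argument (Section~5) one must remove the energy $E$ from the Green's-function estimate; this is done by showing that for a.e.\ $\omega$ the orbit $\{T_\omega^j x_0\}_{j\sim M}$ avoids the bad semi-algebraic set $\mathrm{Proj}_{\mathbb{T}^{d+1}}S_N$ simultaneously for all $E$ in the relevant finite spectrum. Your phrasing ``eliminating double resonances \dots\ intersected with the orbit $\{T^nx_0\}$'' is too vague and is modeled on the $\mathbb{Z}^2$ quasi-periodic picture; here the elimination is genuinely in the $\omega$-variable, and the proof of Lemma~\ref{l4.6} requires an iterated application of the transversality/decomposition Lemma~\ref{l4.5} through a nested family of hyperplanes $Q^{(j)}_{m_1,\ldots,m_i}$ adapted to the polynomial structure of $(T_\omega^j x)_i$, finishing with a Vandermonde argument to rule out too many nearly-parallel directions $\xi_j$. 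This is where the growth of polynomial degrees with $d$ actually bites, and your plan should isolate this lemma rather than the equidistribution as the piece requiring new ideas.
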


We summarize the scheme of the proof.
As mentioned above, the transfer matrix and Lyapounov exponent approach is not applicable to the long range case here.
We will use the multi-scale method developed  in \cite{B02}, \cite{BGS2}. Our basic strategy is the same as that in \cite{B02}, but with more complicated computations.
First, we need Green's function estimates for $G_{[0,N]}(E,x)=(R_{[0,N]}(H(x)-E)R_{[0,N]})^{-1}$, where $R_{\Lambda}$ is the restriction operator to $\Lambda\subset\mathbb{Z}$.
We will prove in Section 3 that
\begin{equation}
   \| G_{[0,N]}(E,x)\| <e^{N^{1-}},
\end{equation}
\begin{equation}
  |G_{[0,N]}(E,x)(m,n)|<e^{-\frac{1}{100}|m-n|},\quad 0\leq m,n\leq N, |m-n|>\frac{N}{10}
\end{equation}
for $x\notin \Omega_{N}(E)$, where
\begin{equation}
{\rm mes} \Omega_{N}(E)<e^{-N^{\sigma}}, \sigma>0.
\end{equation}
The main difficulty here is to study the intersection of $\Omega_{N}(E)$ and skew shift orbits.
We need to prove
\begin{equation} \label{a7}
 \#\{n=1,\ldots,M|T^nx\in\Omega_{N}(E)\}<M^{1-\delta}, \delta>0,
\end{equation}
 where
 \begin{equation}
 \log\log M\ll \log N\ll \log M.
\end{equation}
To obtain (\ref{a7}), we study the ergodic property of skew shifts on $\mathbb{T}^{d}$ in Section 2.

Next,
in Section 4, we use decomposition of semi-algebraic set to estimate
\begin{equation*}
{\rm mes}\left[\omega\in\mathbb{T}\Big\lvert(\omega,T^{j}_{\omega}x)\in A,\ \exists j\sim M\right]<M^{-c},c>0,
\end{equation*}
where $x\in\mathbb{T}^{d}$, $A\subset\mathbb{T}^{d+1}$ is a semi-algebraic set of degree $B$ and measure $\eta$, satisfying
\begin{equation*}
\log B\ll\log M\ll\log\frac{1}{\eta}.
\end{equation*}
This is a key point to eliminate the energy $E$ in the proof of Anderson localization.

Finally,
using Green's function estimates and semi-algebraic set theory, we prove Anderson localization of the operator $H_{\omega}(x)$ in Section 5 as in \cite{BG}, \cite{BGS1}.

We will use the following notations. For positive numbers $a,b,a\lesssim b$ means $Ca\leq b$ for some constant $C>0$.
$a\ll b$ means $C$ is large. $a\sim b$ means $a\lesssim b$ and $b\lesssim a$. $N^{1-}$ means $N^{1-\epsilon}$ with some small $\epsilon>0$.

\section{An ergodic property of skew shifts on $\mathbb{T}^{d}$}

In this section, we prove the following ergodic property of skew shifts on $\mathbb{T}^{d}$.
\begin{lem}\label{l2.1}
Assume $\omega \in DC$, $T=T_{\omega}$ is the skew shift on $\mathbb{T}^{d}$, $\epsilon>L^{-\frac{1}{(d+1)2^{d+1}}}$. Then
\begin{equation*}
\#\{n=1,\ldots,L||T^nx-a\|<\epsilon\}<C\epsilon^{d}L, \quad C=C(d),
\end{equation*}
where $\| x\|=\inf\limits_{m\in\mathbb{Z}}|x-m|,\ x\in\mathbb{T},\ \| x\|=\sum\limits_{i=1}^{d}\| x_{i}\|,\ x=(x_1,\ldots, x_d)\in\mathbb{T}^{d}$.
\end{lem}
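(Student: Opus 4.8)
The plan is to estimate the number of returns of the skew-shift orbit to an $\epsilon$-ball by passing to Fourier side and exploiting the Diophantine condition layer by layer in the $d$ coordinates. Write $\mathbf{1}_{B(a,\epsilon)}$ for the indicator of the ball of radius $\epsilon$ about $a$ in $\mathbb{T}^d$, and smooth it: choose $\psi = \psi_\epsilon$ a nonnegative trigonometric polynomial with $\psi \geq \mathbf{1}_{B(a,\epsilon)}$, $\int_{\mathbb{T}^d}\psi \lesssim \epsilon^d$, Fourier support in $|k| \lesssim \epsilon^{-1}\log\frac{1}{\epsilon}$ (or even a genuine polynomial of that degree), and $|\hat\psi(k)| \lesssim \epsilon^d e^{-c\epsilon|k|}$ or at least $|\hat\psi(k)| \lesssim \epsilon^d$ with rapid decay past the cutoff. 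Then
\begin{equation*}
\#\{1\le n\le L : \|T^n x - a\| < \epsilon\} \le \sum_{n=1}^{L} \psi(T^n x) = \sum_{k} \hat\psi(k)\sum_{n=1}^{L} e(k\cdot T^n x),
\end{equation*}
so the task reduces to bounding the Weyl sums $S_L(k) = \sum_{n=1}^L e(k\cdot T^n x)$ for $0 < |k| \lesssim \epsilon^{-1}\log\frac1\epsilon$; the $k=0$ term contributes exactly $L\hat\psi(0) \lesssim \epsilon^d L$, which is the main term.

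The heart of the matter is the bound on $S_L(k)$. Recall that for the skew shift on $\mathbb{T}^d$ the $i$-th coordinate of $T^n x$ is a polynomial in $n$ of degree $d+1-i$ with leading coefficient a fixed nonzero multiple of $\omega$ (coming from the bottom coordinate feeding up through the triangular cascade), plus lower-order terms depending on $x$. Hence $k \cdot T^n x$ is a real polynomial in $n$ of degree $D \le d$ whose top coefficient (for the given $k \ne 0$) is, up to a nonzero integer combination dictated by the first nonzero component $k_i$, equal to $k_i \binom{n}{d+1-i}$-type leading term times $\omega$ — in any case a number of the form (integer $\lesssim |k|$)$\cdot \omega$ modulo lower terms. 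I would apply Weyl differencing $D \le d$ times: each differencing step replaces the degree-$D$ polynomial phase by a degree-$(D-1)$ one and costs a square root and an extra averaging, ultimately reducing $|S_L(k)|^{2^d}$ to an average over shift parameters $h_1,\dots,h_{D}$ of linear exponential sums $\big|\sum_n e(n \cdot q(h_1,\dots,h_D) \omega + \cdots)\big|$ where $q$ is an integer of size $\lesssim |k| \cdot L^{O(1)}$. Bounding the linear sum by $\min(L, \|q\omega\|^{-1})$ and invoking the Diophantine condition $\|q\omega\| > c|q|^{-2} \gtrsim (|k|L^{O(1)})^{-2}$ gives, after the standard Weyl-sum bookkeeping, an estimate of the shape $|S_L(k)| \lesssim L^{1 - \rho} |k|^{\rho'}$ for explicit small $\rho, \rho' > 0$ depending only on $d$; the exponent $2^{d+1}$ and the factor $(d+1)$ in the hypothesis $\epsilon > L^{-1/((d+1)2^{d+1})}$ are exactly what is needed to make $|k|^{O(1)} L^{1-\rho}$ (summed over $|k| \lesssim \epsilon^{-1}\log\frac1\epsilon$, i.e. with $|k|^{O(1)}$ replaced by $\epsilon^{-O(1)}$) dominated by the main term $\epsilon^d L$ — or at worst by $C\epsilon^d L$ — so that the error does not swamp it.

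Assembling: split $\sum_k \hat\psi(k) S_L(k)$ into $k = 0$ and $k \ne 0$; the former is $\le C \epsilon^d L$; for the latter use $|\hat\psi(k)| \lesssim \epsilon^d$ and $|S_L(k)| \lesssim L^{1-\rho}\epsilon^{-O(1)}$ over the $\lesssim \epsilon^{-d}(\log\frac1\epsilon)^d$ relevant frequencies, bounding the total by $\epsilon^{-O(1)} L^{1-\rho}$, which under $\epsilon > L^{-1/((d+1)2^{d+1})}$ is $\le \epsilon^d L$ once $L$ is large (and for bounded $L$ the statement is trivial after enlarging $C$). The main obstacle is the Weyl-differencing step: one must track carefully that after $d$ rounds of differencing the surviving integer coefficient multiplying $\omega$ is genuinely nonzero (not accidentally killed) and of controlled size, and that the accumulated constants and the loss of $|k|$-powers stay within the budget fixed by the exponent $(d+1)2^{d+1}$ in the hypothesis; managing the polynomial structure of $(T^n x)_i$ and the combinatorial coefficients through the iteration is where the real work lies, with the Diophantine input entering only at the very last, linear stage. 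All implied constants depend only on $d$, giving $C = C(d)$ as claimed.
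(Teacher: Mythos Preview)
Your overall strategy---majorize the indicator by a Fej\'er-type kernel, reduce to Weyl sums $S_L(k)=\sum_{n\le L}e(k\cdot T^nx)$, and then Weyl-difference down to linear exponential sums---is exactly what the paper does. The case split according to the first nonzero coordinate of $k$ (which fixes the degree $D\le d$ of the polynomial phase) and the resulting bounds $|S_L(k)|\lesssim |k_{d-j}|^{1/2^j}L^{1-1/2^{j+1}+}$ are the paper's (2.4), (2.7), (2.9).

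Where your write-up goes wrong is the way you invoke the Diophantine condition. After $D-1$ differencings you are left with a sum of the form
\[
\sum_{h_1,\ldots,h_{D-1}\le L}\min\!\Big(L,\frac{1}{\|q(h)\omega\|}\Big),\qquad q(h)\asymp k_{i_0}\,h_1\cdots h_{D-1},
\]
and you propose to bound each term using $\|q\omega\|>c|q|^{-2}$ pointwise. That gives $\min(L,\|q\omega\|^{-1})\le |q|^2/c\lesssim (|k|L^{D-1})^2$, which is far larger than $L$ for generic $h$, so no saving of $L^{-\rho}$ comes out; the ``standard Weyl-sum bookkeeping'' does not rescue this. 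The Diophantine hypothesis is \emph{not} used termwise. What the paper actually does (and what you need) is: first collapse the $h$-sum via the divisor bound to $\sum_{m\le |k|L^{D-1}}\min(L,\|m\omega\|^{-1})$, then use DC only to guarantee the existence of a continued-fraction denominator $q$ with $L^{1/2}<q<L$, and finally apply the Vinogradov-type block estimate
\[
\sum_{n=M+1}^{M+q}\min\!\Big(L,\frac{1}{\|n\omega\|}\Big)\lesssim L+q\log q,
\]
summing over $\lesssim |k|L^{D-1}/q$ blocks. This is where the genuine gain $L^{-1/2^{D}}$ originates, and it is the step missing from your sketch. Once you insert this, the exponents line up with the hypothesis $\epsilon>L^{-1/((d+1)2^{d+1})}$ exactly as in the paper's final summation.
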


\begin{proof}
We assume $a=0$. Let $\chi$ be the indicator function of the ball $B(0,\epsilon)$, $R=\frac{1}{\epsilon}$, $F_{R}$ is the Fejer kernel,
then $\chi\leq C\epsilon^{d}\prod\limits_{j=1}^{d}F_{R}(x_{j})$.

Let $f(x)=\prod\limits_{j=1}^{d}F_{R}(x_{j})$, then
\begin{align*}
\sum_{n=1}^{L}\chi(T^nx)\leq C\epsilon^{d}\sum_{n=1}^{L}f(T^nx)\leq C\epsilon^{d}\sum_{n=1}^{L}\sum_{0\leq|l_j|<R}\hat{f}(l_1,\ldots,l_d)e^{2\pi i\langle T^nx,l\rangle}\\
\leq C\epsilon^{d}\left(L+\sum_{0<\lvert k\mid<\frac{1}{\epsilon}}\Big\lvert\sum_{n=1}^{L}e^{2\pi i\langle T^nx,k\rangle}\Big\lvert\right).
\end{align*}

Let
\begin{equation}\label{2.1}
S_{k}=\Big\lvert\sum_{n=1}^{L}e^{2\pi i\langle T^nx,k\rangle}\Big\lvert,\quad 0<|k|<\frac{1}{\epsilon},
\end{equation}
we only need to prove
\begin{equation}\label{2.2}
\sum_{0<|k|<\frac{1}{\epsilon}}S_{k}\leq CL.
\end{equation}

From the skew shift, we have
\begin{equation}\label{2.3}
(T^nx)_{i}=x_{i}+nx_{i+1}+\cdots+\binom{n}{d-i}x_{d}+\binom{n}{d-i+1}\omega,\quad i=1,\ldots,d,\quad x=(x_1,\ldots, x_d).
\end{equation}

If $k_1=\cdots=k_{d-1}=0$, then
\begin{equation}\label{2.4}
S_{k}=\left|\sum_{n=1}^{L}e^{2\pi ink_d\omega}\right|\leq\frac{1}{\| k_d\omega\|}\leq C| k_d|^{2}.
\end{equation}

If $k_1=\cdots=k_{d-2}=0,k_{d-1}\neq0$, then $S_{k}=\left|\sum\limits_{n=1}^{L}e^{2\pi if(n)}\right|$, where $f(n)=\frac{1}{2}n^{2}k_{d-1}\omega+cn$,
$c$ is independent of $n$.

So,
$$
\begin{aligned}
S_{k}^{2} &= \left(\sum_{n=1}^{L}e^{2\pi if(n)}\right)\left(\sum_{n=1}^{L}e^{-2\pi if(n)}\right) \lesssim L+\sum_{h=1}^{L-1}\left|\sum_{n=1}^{L-h}e^{2\pi i(f(n+h)-f(n))}\right|\\
 &\lesssim L+\sum_{h=1}^{L-1}\min\left(L,\frac{1}{\| hk_{d-1}\omega\|}\right) \lesssim L+\sum_{m=1}^{|k_{d-1}|L}\min\left(L,\frac{1}{\|m\omega\|}\right).
\end{aligned}
$$

Since $\omega \in DC$, we may find an approximant $q$ of $\omega$ satisfying
\begin{equation}\label{2.5}
L^{\frac{1}{2}}<q<L.
\end{equation}

Using
\begin{equation*}
\#\left\{M+1\leq n\leq M+q\Big\lvert\| n\omega-u\|\leq\frac{1}{2q}\right\}\leq3, \quad\forall M\in \mathbb{Z}, u\in \mathbb{R},
\end{equation*}
we get
\begin{equation}\label{2.6}
  \sum_{n=M+1}^{M+q}\min\left(L,\frac{1}{\| n\omega\|}\right)\lesssim L+q\log q.
\end{equation}

By (\ref{2.5}),(\ref{2.6}), we have
\begin{equation*}
S_{k}^{2}\lesssim \frac{|k_{d-1}|L}{q}(L+q\log q)\lesssim|k_{d-1}|L^{\frac{3}{2}}.
\end{equation*}

Hence
\begin{equation}\label{2.7}
S_{k}\leq C|k_{d-1}|^{\frac{1}{2}}L^{\frac{3}{4}}.
\end{equation}

If $k_1=\cdots=k_{d-3}=0,k_{d-2}\neq0$, then $S_{k}=\left|\sum\limits_{n=1}^{L}e^{2\pi ig(n)}\right|$, where
$g(n)=\frac{1}{6}n^{3}k_{d-2}\omega+bn^2+cn$, $b,c$ is independent of $n$.

So,
\begin{equation*}
  S_{k}^{2}\lesssim L+\sum_{h_1=1}^{L-1}\left|\sum_{n=1}^{L-h_1}e^{2\pi ig_{h_{1}}(n)}\right|,\quad g_{h_{1}}(n)=g(n+h_{1})-g(n).
\end{equation*}

We have
$$
\begin{aligned}
 S_{k}^{4} & \lesssim L^{2}+L\sum_{h_1=1}^{L-1}\left|\sum_{n=1}^{L-h_1}e^{2\pi ig_{h_{1}}(n)}\right|^{2}\\
  &\lesssim L^{3}+L\sum_{h_1=1}^{L-1}\sum_{h_2=1}^{L-h_1-1}\left|\sum_{n=1}^{L-h_1-h_2}e^{2\pi i(g_{h_{1}}(n+h_2)-g_{h_{1}}(n))}\right|\\
  &\lesssim L^{3}+L\sum_{h_1=1}^{L}\sum_{h_2=1}^{L}\min\left(L,\frac{1}{\| h_1h_2k_{d-2}\omega\|}\right).
\end{aligned}
$$

Using
\begin{equation*}
  \#\{(h_1,h_2)\in\mathbb{Z}^{2}\mid h_1h_2=N\}\lesssim N^{0+},
\end{equation*}
we get
\begin{equation*}
S_{k}^{4}\lesssim L^{3}+L^{1+}\sum_{m=1}^{|k_{d-2}|L^{2}}\min\left(L,\frac{1}{\| m\omega\|}\right)
\lesssim L^{3}+L^{1+}\frac{|k_{d-2}|L^{2}}{q}(L+q\log q)\lesssim|k_{d-2}|L^{\frac{7}{2}+}.
\end{equation*}

Hence
\begin{equation}\label{2.8}
S_{k}\leq C|k_{d-2}|^{\frac{1}{4}}L^{\frac{7}{8}+}.
\end{equation}

Repeat the argument above, we get
\begin{equation}\label{2.9}
S_{k}\leq C|k_{d-j}|^{\frac{1}{2^{j}}}L^{1-\frac{1}{2^{j+1}}+},\quad k_1=\cdots=k_{d-j-1}=0,k_{d-j}\neq0,\quad 2\leq j \leq d-1 .
\end{equation}

By (\ref{2.4}), (\ref{2.7}), (\ref{2.9}), we have
$$
\begin{aligned}
  \sum_{0<| k|<\frac{1}{\epsilon}}S_{k}&
  \lesssim \sum_{|k_{d}|<\frac{1}{\epsilon}}| k_{d}|^{2}+\frac{1}{\epsilon}\sum_{|k_{d-1}|<\frac{1}{\epsilon}}|k_{d-1}|^{\frac{1}{2}}L^{\frac{3}{4}}
  +\sum_{j=2}^{d-1}\frac{1}{\epsilon^{j}}\left(\sum_{| k_{d-j}|<\frac{1}{\epsilon}}|k_{d-j}|^{\frac{1}{2^{j}}}L^{1-\frac{1}{2^{j+1}}+}\right)\\
  &\lesssim(\frac{1}{\epsilon})^{3}+\frac{1}{\epsilon}(\frac{1}{\epsilon})^{\frac{3}{2}}L^{\frac{3}{4}}+\sum_{j=2}^{d-1}\left((\frac{1}{\epsilon})^{\frac{1}{2^{j}}+j+1}L^{1-\frac{1}{2^{j+1}}+}\right)\lesssim L.
\end{aligned}
$$

This proves (\ref{2.2}) and Lemma \ref{l2.1}.
\end{proof}

\begin{rem}\label{r2.2}
In the proof of Lemma \ref{l2.1}, we only need to assume
\begin{equation*}
  \|k\omega\|>c|k|^{-2},\quad \forall 0<|k|\leq L.
\end{equation*}
\end{rem}

\section{Green's function estimates}

In this section, we will prove the Green's function estimates using multi-scale analysis in \cite{B02}.

We need the following lemma.

\begin{lem}[Lemma 3.16 in \cite{B02}]\label{l3.1}
Let $A(x)=\{A_{mn}(x)\}_{1\leq m,n\leq N}$ be a matrix valued function on $\mathbb{T}^{d}$ such that
\begin{equation}\label{b1}
\mbox{$A(x)$ is self adjoint for $x\in\mathbb{T}^{d}$},
\end{equation}
\begin{equation}\label{b2}
\mbox{$A_{mn}(x)$ is a trigonometric polynomial of degree $<N^{C_{1}}$},
\end{equation}
\begin{equation}\label{b3}
|A_{mn}(x)|<C_2e^{-c_2|m-n|},
\end{equation}
where $c_2,C_1,C_2>0$ are constants.

Let $0<\delta<1$ be sufficiently small, $M=N^{\delta^{6}},\ L_0=N^{\frac{1}{100}\delta^{2}},\ 0<c_3<\frac{1}{10}c_2.$

Assume that for any interval $I\subset[1,N]$ of size $L_0$, except for $x$ in a set of measure at most $e^{-L_0^{\delta^{3}}}$,
\begin{equation}\label{b4}
\|(R_IA(x)R_I)^{-1}\|<e^{L_0^{1-}},
\end{equation}
\begin{equation}\label{b5}
|(R_IA(x)R_I)^{-1}(m,n)|<e^{-c_3|m-n|},\quad m,n\in I,|m-n|>\frac{L_0}{10}.
\end{equation}

Fix $x\in\mathbb{T}^{d}, n_0\in[1,N]$ is called a good site if $I_0=[n_0-\frac{M}{2},n_0+\frac{M}{2}]\subset[1,N]$,
\begin{equation}\label{b6}
\|( R_{I_{0}}A(x)R_{I_{0}})^{-1}\|<e^{M^{1-}},
\end{equation}
\begin{equation}\label{b7}
| (R_{I_{0}}A(x)R_{I_{0}})^{-1}(m,n)|<e^{-c_3|m-n|},\quad m,n\in I_0,|m-n|>\frac{M}{10}.
\end{equation}

Denote $\Omega(x)\subset[1,N]$ the set of bad sites.
Assume that for any interval $J\subset[1,N], |J|>N^{\frac{\delta}{5}}$, we have $|J\cap\Omega(x)|<|J|^{1-\delta}$.

Then
\begin{equation}\label{b8}
\| A(x)^{-1}\|<e^{N^{1-\frac{\delta}{C(d)}}},
\end{equation}
\begin{equation}\label{b9}
| A(x)^{-1}(m,n)|<e^{-c'_3|m-n|},\quad m,n\in [1,N],|m-n|>\frac{N}{10}
\end{equation}
except for $x$ in a set of measure at most $e^{-\frac{N^{\delta^{2}}}{C(d)}}$, where $C(d)$ is a constant depending on $d$, $c_{3}'>c_{3}-(\log N)^{-8}$.
\end{lem}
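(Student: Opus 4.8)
The plan is to run the standard multi-scale/resolvent-expansion argument (the "paving" or "coupling" scheme of Bourgain) that passes from good resolvent control at the small scale $L_0$ and at the intermediate scale $M$ to control at the large scale $N$. First I would fix an energy-independent starting point: for a fixed $x$, decompose $[1,N]$ into the set of good sites and the bad set $\Omega(x)$, where by hypothesis $|J\cap\Omega(x)|<|J|^{1-\delta}$ for every subinterval $J$ of length $>N^{\delta/5}$. Around each good site $n_0$ one has a block $I_0$ of size $M$ on which (\ref{b6})--(\ref{b7}) hold, i.e. $\|(R_{I_0}A R_{I_0})^{-1}\|<e^{M^{1-}}$ with off-diagonal decay at rate $c_3$. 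The first real step is to produce, for each good site, a local inverse and to patch these local inverses together via the resolvent identity $A^{-1}=R_{I_0}(R_{I_0}AR_{I_0})^{-1}R_{I_0}-(\cdots)\Gamma(\cdots)$, where $\Gamma$ collects the boundary (edge) terms of the block. Iterating this patching, a matrix element $A^{-1}(m,n)$ gets expanded as a sum over "paths" of blocks connecting $m$ to $n$: each step through a good block contributes a factor bounded by $e^{M^{1-}}$ times exponential decay $e^{-c_3(\text{step length})}$ over a distance comparable to $M$, so each step gains a genuine factor like $e^{-c_3 M/2+M^{1-}}\ll e^{-c_3 M/4}$; the number of bad sites one may have to "jump over" on any path is controlled by the sublinear bound on $|\Omega(x)|$.

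The second step is the bookkeeping that turns this path expansion into (\ref{b8})--(\ref{b9}). I would argue that any path from $m$ to $n$ with $|m-n|>N/10$ must take at least $\sim |m-n|/M$ steps, and since the density of bad sites along any long interval is $O(|J|^{-\delta})$, a path cannot spend more than a $|m-n|^{-\delta}$-fraction of its length inside bad regions; hence at least $(1-o(1))|m-n|/M$ steps are "good" steps, each contributing a decay factor, and the resulting product is bounded by $e^{-(c_3-(\log N)^{-8})|m-n|}$ after absorbing the combinatorial path count (which is at most $C^{|m-n|/M}$, a subexponential loss at the rate $(\log N)^{-8}$ once $M=N^{\delta^6}$). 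For the norm bound (\ref{b8}) one uses the crudest version of the same expansion: the worst a single block contributes is $e^{M^{1-}}$, the chain has length $\le N$, and the bad sites force at most $N^{1-\delta}$ "uncontrolled" insertions each costing at most a polynomial-in-$N$ factor coming from (\ref{b2}) (degree $<N^{C_1}$ trigonometric polynomials have sup-norm-to-inverse bounds that are at most $e^{(\log N)^{O(1)}}$ on the relevant set), so the product telescopes to $e^{N^{1-\delta/C(d)}}$.

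The third step is the measure estimate. The set of $x$ for which the hypotheses (\ref{b4})--(\ref{b5}) fail on some length-$L_0$ interval has measure $\le N\cdot e^{-L_0^{\delta^3}}$, and $L_0=N^{\delta^2/100}$ makes this $\le e^{-N^{\delta^2}/C(d)}$; similarly the exceptional set at scale $M$ is swallowed. The sites where $I_0\subset[1,N]$ fails are only the $O(M)$ near the edges and cause no harm. Collecting all exceptional sets over the $O(N^{C_1})$-degree semi-algebraic description and using that $M,L_0$ are powers of $N$, the total exceptional measure is $\le e^{-N^{\delta^2}/C(d)}$, which is (\ref{b8})--(\ref{b9}) away from that set. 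I expect the main obstacle to be the quantitative control of the combinatorics in the resolvent expansion: one must verify that the number of bad-site "detours," weighted by their lengths, is small enough that the accumulated losses stay within the advertised $(\log N)^{-8}$ budget on the decay rate and the $N^{1-\delta/C(d)}$ budget on the norm — this is where the sublinear bound $|J\cap\Omega(x)|<|J|^{1-\delta}$ and the choice $M=N^{\delta^6}$ are used in an essential, not cosmetic, way, and where most of the care (as opposed to length) of the argument lies. Since this is exactly Lemma 3.16 of \cite{B02} transplanted to $\mathbb{T}^d$, and the resolvent-expansion mechanics are insensitive to the number of torus variables (they only see the one-dimensional lattice structure and the hypotheses (\ref{b1})--(\ref{b3})), the proof is essentially a citation with the scale parameters recorded; I would present it as such.
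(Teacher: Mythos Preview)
Your proposal is correct and, in fact, more than the paper itself provides: the paper states this lemma as a direct citation of Lemma~3.16 in \cite{B02} and gives no proof at all. Your closing remark that ``the proof is essentially a citation with the scale parameters recorded'' is exactly what the paper does, and your sketch of the resolvent-expansion/paving mechanism behind it is an accurate summary of Bourgain's argument, which indeed depends only on the one-dimensional lattice structure and hypotheses (\ref{b1})--(\ref{b7}), not on the dimension $d$ of the torus.
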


By Lemma \ref{l2.1}, Lemma \ref{l3.1}, we can prove the Green's function estimates.

\begin{prop}\label{p3.2}
  Let $T=T_{\omega}:\mathbb{T}^{d}\rightarrow\mathbb{T}^{d}$ be the skew shift with frequency $\omega$ satisfying
  \begin{equation}\label{b10}
   \| k\omega\|>c|k|^{-2}, \quad \forall 0<|k|\leq N.
   \end{equation}
   $A_{mn}(x)$ is the form (\ref{a1})-(\ref{a5}),$\gamma$ in (\ref{a5}) is small.

Then for all $N$ and energy $E$,
 \begin{equation}\label{b11}
   \| G_{[0,N]}(E,x)\| <e^{N^{1-}},
 \end{equation}
  \begin{equation}\label{b12}
  |G_{[0,N]}(E,x)(m,n)|<e^{-\frac{1}{100}|m-n|},\quad  0\leq m,n\leq N, |m-n|>\frac{N}{10}
  \end{equation}
 for $x\notin \Omega_{N}(E)$, where
 \begin{equation}\label{b13}
 {\rm mes} \Omega_{N}(E)<e^{-N^{\sigma}},\sigma>0.
 \end{equation}
\end{prop}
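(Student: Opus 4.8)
The plan is to run an induction on scales, using Lemma \ref{l3.1} as the inductive step and Lemma \ref{l2.1} to verify its hypothesis about the distribution of bad sites along the skew-shift orbit. Fix the energy $E$ and set $A(x) = R_{[0,N]}(H_\omega(x) - E)R_{[0,N]}$. This matrix satisfies the structural hypotheses \eqref{b1}--\eqref{b3} of Lemma \ref{l3.1}: it is self-adjoint by \eqref{a1}--\eqref{a2}, its entries are trigonometric polynomials of controlled degree by \eqref{a3}--\eqref{a4} (the degree of $\phi_{m-n}(T^m x)$ is bounded by $|m-n|^{C_1}$ times a power of $N$ coming from the skew-shift expansion \eqref{2.3}, hence is $<N^{C_1'}$ on $[0,N]$), and its off-diagonal decay $|A_{mn}(x)|<C e^{-|m-n|}$ follows from \eqref{a5}. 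One then defines scales $N_0 \ll N_1 \ll \cdots$ by $N_{j+1} = N_j^{1/\delta^6}$ (equivalently $N_j = N_{j+1}^{\delta^6}$, matching the $M$ of Lemma \ref{l3.1}) for a fixed small $\delta$, and proves by induction on $j$ the statement $P(j)$: for every interval $I$ of length $N_j$ inside $[0,N]$ and every energy $E$, the estimates \eqref{b4}--\eqref{b5} (with $N_j$ in place of $L_0$ and the constant $c_3 = \tfrac1{100}$, say, or slightly larger) hold for $x$ outside a set of measure $<e^{-N_j^{\sigma}}$.

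The base case $P(0)$ is the large-disorder / perturbative input: at scale $N_0$ (a fixed large constant independent of $N$), the diagonal $v(T^m x)$ dominates because the off-diagonal part has norm $O(\gamma)$ by \eqref{a5}, so for $\gamma$ small a Neumann-series argument gives \eqref{b4}--\eqref{b5} off the set where $\operatorname{dist}(E, \{v(T^m x)\})$ is small; since $v$ is a nonconstant trigonometric polynomial this exceptional set has small measure uniformly in $E$, and the required measure bound $e^{-N_0^\sigma}$ is then just a statement about a fixed-size set that we can absorb by choosing $\gamma$ small and $N_0$ appropriately. For the inductive step, assume $P(j)$ and apply Lemma \ref{l3.1} on each interval $I$ of length $N_{j+1}$ with $N = N_{j+1}$, $L_0 = N_{j+1}^{\delta^2/100}$, $M = N_{j+1}^{\delta^6} = N_j$: hypothesis \eqref{b4}--\eqref{b5} of the lemma at scale $L_0$ is supplied by $P(j)$ (one needs $L_0 \geq N_j$, which is arranged by the choice of $\delta$ and by possibly inserting one or two auxiliary scales between $N_j$ and $N_{j+1}$), and the crucial remaining hypothesis is the bad-site bound $|J \cap \Omega(x)| < |J|^{1-\delta}$ for all subintervals $J$ with $|J| > N_{j+1}^{\delta/5}$. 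This is exactly where Lemma \ref{l2.1} enters.

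The heart of the argument, and the main obstacle, is converting Lemma \ref{l2.1} — a statement about how often a skew-shift orbit visits a metric ball — into the combinatorial bad-site bound. A site $n_0$ is bad only if $T^{n_0}x$ lies in the "bad set" $\Omega_{N_j}(E)$ at scale $N_j$, which by the inductive hypothesis has measure $<e^{-N_j^{\sigma}}$; the point is that this set, while small in measure, need not be a ball, so one must first cover $\Omega_{N_j}(E)$ by a controlled number of $\epsilon$-balls. Here the semi-algebraic structure of $\Omega_{N_j}(E)$ is used: it is a semi-algebraic set of degree bounded by a power of $N_j$ (coming from the polynomial degrees in \eqref{b2}), so by the standard Yomdin--Gromov type bound it can be covered by at most $(N_j)^{C}(1/\epsilon)^{d}$ balls of radius $\epsilon$; choosing $\epsilon$ comparable to $e^{-N_j^{\sigma'}}$ with $\sigma' < \sigma$ keeps the total measure of the cover below $e^{-N_j^{\sigma''}}$ while keeping the number of balls sub-exponential in $N_j$. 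Then for each ball $B(a,\epsilon)$ in the cover, Lemma \ref{l2.1} (applicable since $\epsilon > L^{-1/((d+1)2^{d+1})}$ holds in the regime $\log\log N_{j+1} \ll \log N_j \ll \log N_{j+1}$, i.e. $L = N_{j+1}$ is polynomially but not too much larger than $N_j$, cf.\ Remark \ref{r2.2}) bounds the number of visits of the orbit $\{T^n x\}_{n \in J}$ to $B(a,\epsilon)$ by $C\epsilon^d |J|$; summing over the cover gives $|J \cap \Omega(x)| < (N_j)^{C}\epsilon^d |J| < |J|^{1-\delta}$ because $\epsilon^d$ is exponentially small in $N_j$ and hence beats any power of $N_j$ as well as the factor needed to pass from $|J|$ to $|J|^{1-\delta}$ (using $|J| > N_{j+1}^{\delta/5} > N_j$). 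This is precisely the estimate \eqref{a7} advertised in the introduction. With this hypothesis verified, Lemma \ref{l3.1} yields \eqref{b6}--\eqref{b9} at scale $N_{j+1}$ off a set of measure $e^{-N_{j+1}^{\delta^2}/C(d)} < e^{-N_{j+1}^{\sigma}}$ (shrinking $\sigma$ once and for all), with the decay rate $c_3$ degrading only by $\sum_j (\log N_j)^{-8} = O(1) \cdot (\log N_0)^{-8}$, which stays above $\tfrac1{100}$ if $c_3$ is chosen slightly larger to begin with and $N_0$ large. Iterating until $N_j \sim N$ completes the proof; the only genuinely delicate bookkeeping is matching the geometric scale growth $N^{1/\delta^6}$ of the induction against the requirement $L_0 \geq N_j$ in the hypotheses of Lemma \ref{l3.1}, which forces a careful but routine choice of the constant $\delta$ (and possibly a finite chain of intermediate scales per step).
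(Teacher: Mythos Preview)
Your inductive scheme is correct and matches the paper's approach, but the bad-site counting step contains a genuine error that breaks the argument. You choose $\epsilon$ comparable to $e^{-N_j^{\sigma'}}$, i.e.\ exponentially small in the previous scale. This is incompatible with Lemma~\ref{l2.1}, whose hypothesis $\epsilon > L^{-1/((d+1)2^{d+1})}$ forces $\epsilon$ to be at worst a \emph{small polynomial} power of $L=|J|$, and hence of $N_j$ (since $|J|\le N_{j+1}$ and $N_{j+1}$ is a fixed power of $N_j$). So the equidistribution lemma simply does not apply at the scale you picked. Relatedly, your arithmetic is off: with $(N_j)^C(1/\epsilon)^d$ balls and $C\epsilon^d|J|$ visits per ball, the product is $(N_j)^C|J|$, not $(N_j)^C\epsilon^d|J|$ --- the $\epsilon$-factors cancel completely and you get no gain at all.

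The fix, which is exactly what the paper does, has two ingredients. First, take $\epsilon$ at the threshold of Lemma~\ref{l2.1}, namely $\epsilon=L^{-1/((d+1)2^{d+1})}$ (a small negative power of $L$). Second, do not use the trivial $(1/\epsilon)^d$ covering number: since $\Omega$ is semi-algebraic of degree $\le M^C$ \emph{and} has measure $<e^{-M^{\delta^3}}<\epsilon^d$, Corollary~\ref{c4.4} gives a covering by only $M^C(1/\epsilon)^{d-1}$ balls of radius $\epsilon$. The saved factor of $\epsilon$ is the whole point. The count becomes
\[
\#\{1\le n\le L:T^nx\in\Omega\}\;<\;M^C\Bigl(\tfrac{1}{\epsilon}\Bigr)^{d-1}\cdot C\epsilon^d L\;=\;C M^C\,\epsilon\,L\;<\;L^{C\delta^5+1-\frac{1}{(d+1)2^{d+1}}}\;<\;L^{1-\delta}
\]
for $\delta$ small, using $M=N^{\delta^6}$ and $L>N^{\delta/5}$. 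Without the $(d-1)$ exponent from Corollary~\ref{c4.4}, no choice of $\epsilon$ makes the bookkeeping close; the ``Yomdin--Gromov type bound'' you invoke must be applied in this refined form that uses the smallness of the measure, not just the degree.
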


\begin{proof}
  Since $T^n(x_1,\ldots,x_d)=\left(x_1+nx_2+\cdots+\binom{n}{d-1}x_d+\binom{n}{d}\omega,\ldots,x_d+n\omega\right)$, $A_{mn}(x)$ is a trigonometric polynomial in $x$ of degree $<(|m|+|n|)^{C_1+d}$,
  $\{A_{mn}(x)-E\}_{0\leq m,n\leq N}$ satisfy (\ref{b1})-(\ref{b3}) with $c_2=1,C_2=\gamma$.

  First fix any large initial scale $N_0$ and choose $\gamma=\gamma(N_0)$ small, using Lojasiewicz's inequality (see Section 4 in \cite{B02}),
  we get
  \begin{equation}\label{b14}
  |G_{[0,N_0]}(E,x)(m,n)|<e^{N_0^{\frac{1}{2}}-\frac{1}{2}|m-n|},\quad 0\leq m,n\leq N_0
  \end{equation}
  except for $x$ in a set of measure $<e^{-cN_0^{\frac{1}{2}}}$.

  Then we estabish inductively on the scale $N$ that
   \begin{equation}\label{b15}
   {\rm mes}\left[x\in\mathbb{T}^{d}\Big\lvert|G_{[0,N]}(E,x)(m,n)|>e^{N^{1-}-c_3|m-n|\chi_{|m-n|>\frac{N}{10}}} ,\ \exists 0\leq m,n\leq N\right]<e^{-N^{\delta^{3}}},
  \end{equation}
  where $c_3>\frac{1}{100},\ 0<\delta<1$ is a fixed small number.

(\ref{b14}) implies (\ref{b15}) for an initial large scale $N_0$.

Assume (\ref{b15}) holds up to scale $L_0=N^{\frac{1}{100}\delta^{2}}$.
Since $A_{m+1,n+1}(x)=A_{mn}(Tx)$, we have
\begin{equation*}
    R_{I}(A(x)-E)R_{I}= R_{[0,N]}(A(T^{n}x)-E)R_{[0,N]},G_{I}(E,x)=G_{[0,N]}(E,T^{n}x),\quad I=[n,n+N].
  \end{equation*}
  Since $T$ is measure preserving,(\ref{b4}),(\ref{b5}) will hold for $x$ outside a set of measure at most $e^{-L_0^{\delta^{3}}}$.
  Denote $\Omega(x)\subset[0,N]$ the set of bad sites with respect to scale $M=N^{\delta^{6}}$. $n_0\notin\Omega(x)$ means

  \begin{align} \label{b16}
  &|G_{[0,M]}(E,T^{n_{0}-\frac{M}{2}}x)(m,n)|= \\
\nonumber  &|G_{[n_{0}-\frac{M}{2},n_{0}+\frac{M}{2}]}(E,x)(m+n_{0}-\frac{M}{2},n+n_{0}-\frac{M}{2})|<e^{M^{1-}-c_3|m-n|\chi_{|m-n|>\frac{M}{10}}}.
  \end{align}

  From the inductive hypothesis, we have
  \begin{align}\label{b17}
  |G_{[0,M]}(E,x)(m,n)|<e^{M^{1-}-c_3|m-n|\chi_{|m-n|>\frac{M}{10}}}, 0\leq m,n\leq M,\quad \forall x\notin\Omega,\quad {\rm mes}\Omega<e^{-M^{\delta^{3}}}.
  \end{align}

  By (\ref{b16}), (\ref{b17}), Lemma \ref{l3.1}, we only need to show that for any $x\in\mathbb{T}^{d},\ N^{\frac{\delta}{5}}<L<N$,
   \begin{align}\label{b18}
  \#\{1\leq n\leq L|T^nx\in\Omega\}<L^{1-\delta}.
  \end{align}

  Since $A_{mn}(x)$ is a trigonometric polynomial of degree $<(|m|+|n|)^{C}$, we can express $G_{[0,M]}(E,x)(m,n)$ as a ratio of determinants to write
(\ref{b17}) in the form
 \begin{align}\label{b19}
  P_{mn}(\cos x_1,\sin x_1,\ldots,\cos x_d,\sin x_d)\leq 0,
  \end{align}
 where $P_{mn}$  is a polynomial of degree at most $M^{C}$. Replacing $\cos, \sin$ by truncated power series, permits us to replace (\ref{b19}) by
  \begin{align}\label{b20}
   P_{mn}( x_1,\ldots, x_d)\leq 0,\quad {\rm deg} P_{mn}<M^{C}.
  \end{align}
So, $\Omega$ may be viewed as a semi-algebraic set of degree at most $M^{C}$.
(For properties of semi-algebraic sets, see Section 4.)
When $\epsilon>e^{-\frac{1}{d}M^{\delta^{3}}}$, by Corollary \ref{c4.4}, $\Omega$ may be covered by at most $M^{C}(\frac{1}{\epsilon})^{d-1} \epsilon$-balls.
Choosing $\epsilon=L^{-\frac{1}{(d+1)2^{d+1}}}>N^{-1}>e^{-\frac{1}{d}M^{\delta^{3}}}$, by (\ref{b10}), using Lemma \ref{l2.1}, Remark \ref{r2.2}, we have
\begin{equation*}
  \#\{1\leq n\leq L|T^nx\in\Omega\}<M^{C}(\frac{1}{\epsilon})^{d-1}\epsilon^{d}L<L^{C\delta^{5}+1-\frac{1}{(d+1)2^{d+1}}}<L^{1-\delta},
\end{equation*}
when $\delta$ is small enough.

This proves (\ref{b18}) and Proposition \ref{p3.2}.
\end{proof}

\section{Semi-algebraic sets}

We recall some basic facts of semi-algebraic sets. Let $\mathcal{P}=\{P_1,\ldots,P_s\}\subset\mathbb{R}[X_1,\ldots,X_n]$
be a family of real polynomials whose degrees are bounded by $d$.
A semi-algebraic set is given by
\begin{equation}\label{c1}
S=\bigcup_{j}\bigcap_{l\in L_{j}}\left\{\mathbb{R}^{n}\Big\lvert P_ls_{jl}0\right\},
\end{equation}
where $L_{j}\subset\{1,\ldots,s\},s_{jl}\in\{\leq,\geq,=\}$ are arbitrary.
We say that $S$ has degree at most $sd$ and its degree is the $\inf$ of $sd$ over all representations as in (\ref{c1}).

The projection of a semi-algebraic set of $\mathbb{R}^{n}$ onto $\mathbb{R}^{m}$ is semi-algebraic.
\begin{prop}[\cite{BPR}]\label{p4.1}
  Let $S\subset\mathbb{R}^{n}$ be a semi-algebraic set of degree $B$, then any projection of $S$ has degree at most $B^{C}, C=C(n)$.
\end{prop}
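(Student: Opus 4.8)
The plan is to reduce the statement to the effective Tarski--Seidenberg elimination of a single variable and then iterate. Since the projection $\mathbb{R}^{n}\to\mathbb{R}^{m}$ factors through the coordinate projections that forget one variable at a time, it suffices to prove: if $S\subset\mathbb{R}^{n}$ is defined by a Boolean combination of sign conditions on polynomials $P_{1},\dots,P_{s}\in\mathbb{R}[X_{1},\dots,X_{n}]$ of degree $\le d$ (so $S$ has degree $\le sd$), then the projection $\pi(S)=\{x'\in\mathbb{R}^{n-1}\mid\exists\,t\in\mathbb{R},\ (x',t)\in S\}$ is semi-algebraic of degree bounded by a fixed power of $sd$.

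First I would record the structural fact driving the argument. Fix $x'\in\mathbb{R}^{n-1}$ and regard each $P_{i}(x',X_{n})$ as a univariate polynomial in $X_{n}$. The sign vector $(\operatorname{sign}P_{1}(x',t),\dots,\operatorname{sign}P_{s}(x',t))$ is locally constant in $t$ away from the finite set $Z(x')$ of real roots of the $P_{i}(x',\cdot)$; hence the defining Boolean formula of $S$ is constant on each connected component of $\mathbb{R}\setminus Z(x')$. Consequently $\exists\,t\,[(x',t)\in S]$ holds if and only if it holds at one of finitely many test points: the points of $Z(x')$ themselves, a chosen interior point of each bounded interval they cut out, and one point below $\min Z(x')$ and one above $\max Z(x')$. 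The task is therefore to express ``the formula holds at some test point'' as a semi-algebraic condition on $x'$ alone, of controlled degree.

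The heart of the matter is to do this without introducing radicals for the roots, and here I would invoke subresultant (Sturm--Habicht) sequences together with Thom's lemma. For each $P_{i}$, the principal subresultant coefficients of $P_{i}(x',\cdot)$ and its derivative --- polynomials in $x'$ of degree $O(d^{2})$, $O(d)$ of them --- determine, via their signs, the number of real roots of $P_{i}(x',\cdot)$ in any interval with endpoints among the roots; likewise the subresultants of pairs $P_{i},P_{j}$ fix the relative order of their roots. Thom's lemma identifies each real root $\xi$ of $P_{i}(x',\cdot)$ uniquely by the sign pattern of $P_{i}',P_{i}'',\dots$ at $\xi$, and the sign of $P_{j}(x',\xi)$ restricted to such a root is itself determined by the signs of the subresultants of $P_{i}$ and $P_{j}$. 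Assembling all of this, ``$(x',t)\in S$ for some test point $t$'' becomes a Boolean combination of sign conditions on a family of $(sd)^{C_{1}}$ polynomials in $x'$ of degree $\le(sd)^{C_{1}}$ for an absolute constant $C_{1}$; that is, $\pi(S)$ is semi-algebraic of degree $\le B^{C_{1}}$.

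Finally I would iterate: eliminating $X_{n},X_{n-1},\dots,X_{m+1}$ in turn, each step sends a degree bound $D$ to $D^{C_{1}}$, so after the at most $n$ steps the degree of the final projection is at most $B^{C_{1}^{\,n}}=B^{C(n)}$. The main obstacle is the quantitative bookkeeping in the single-variable step: one must bound precisely the degrees and number of the subresultant/Sturm--Habicht polynomials and verify that the Thom-lemma encoding of ``evaluate at the $k$-th root'' does not cause a combinatorial blow-up beyond a fixed power of $sd$. This is exactly what is carried out in detail in \cite{BPR}, while the reduction to one variable and the iteration are routine.
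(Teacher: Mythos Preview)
The paper does not prove this proposition; it is simply quoted as a known result from \cite{BPR} (the Basu--Pollack--Roy effective quantifier elimination), so there is no ``paper's own proof'' to compare against. Your outline --- reduce to eliminating one variable, encode the sign conditions at the real roots via subresultant/Sturm--Habicht sequences and Thom encodings, then iterate --- is exactly the strategy of \cite{BPR}, and the degree bookkeeping you describe is correct in spirit; since the paper only needs the statement as a black box, your sketch is more than sufficient here.
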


We need the following bound on the number of connected components.
\begin{prop}[\cite{B}]\label{p4.2}
Let $S\subset\mathbb{R}^{n}$ be a semi-algebraic set of degree $B$, then the number of connected components of $S$ is bounded by $B^{C}, C=C(n)$.
\end{prop}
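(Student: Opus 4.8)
The plan is to deduce this from the classical Oleinik--Petrovsky--Milnor--Thom bound on the topology of real algebraic sets, upgraded to general semi-algebraic sets by an induction on the number of defining polynomials that keeps the dependence on that number polynomial. Fix a representation as in (\ref{c1}) with family $\mathcal P=\{P_1,\dots,P_s\}$, $\deg P_i\le d$, $sd\le B$. Since the sign cells $\mathcal R(\sigma)=\{x\in\mathbb R^n:\operatorname{sgn}P_i(x)=\sigma_i,\ 1\le i\le s\}$, $\sigma\in\{-1,0,1\}^s$, partition $\mathbb R^n$, each basic piece $\bigcap_{l\in L_j}\{P_l\,s_{jl}\,0\}$, and hence $S$ itself, is a union of such cells; and because the number of connected components of a finite union is at most the sum of the numbers of components of its parts,
\begin{equation*}
b_0(S)\le\sum_{\sigma}b_0\big(\mathcal R(\sigma)\big),
\end{equation*}
the sum ranging over realizable sign conditions, so it suffices to bound this total by $B^{C(n)}$.

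The engine is the case $s=1$: for $Q$ of degree $\le e$, the hypersurface $\{Q=0\}\subset\mathbb R^n$ has at most $e(2e-1)^{n-1}$ connected components, and $\{Q>0\}$, $\{Q<0\}$ admit a bound of the same order. I would prove this in the standard way --- intersect with a large ball $\{\|x\|^2\le\rho\}$, which destroys no component since every component meets a fixed bounded region; perturb $Q$ to $Q-\tau$ for generic small $\tau$ to make the relevant level set a smooth compact hypersurface; and bound its number of components by the number of critical points of a generic linear functional on it, which is $O(e^n)$ by B\'ezout. This is Morse theory together with degree counting, and it in fact bounds all the Betti numbers, not only $b_0$.

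For general $s$ I would induct, adding one polynomial at a time and using that the new equality stratum drops the dimension. Passing from $\mathcal P'=\{P_1,\dots,P_{s-1}\}$ to $\mathcal P$, each cell of $\mathcal P'$ splits by the sign of $P_s$: the parts with $P_s\ne0$ are controlled by the cells of $\mathcal P'$ on $\mathbb R^n\setminus\{P_s=0\}$ combined with the $s=1$ bound for $\{P_s=0\}$ itself, while the parts with $P_s=0$ are sign cells of $P_1,\dots,P_{s-1}$ restricted to the hypersurface $\{P_s=0\}$, of real dimension $\le n-1$, and so fall under the inductive hypothesis in one lower dimension. Bookkeeping this double induction on $(n,s)$ yields an estimate of shape $\sum_{j=0}^{n}\binom{s}{j}(Cd)^{n}\le(C'sd)^{n}\le B^{C(n)}$.

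The main obstacle is exactly this last point: making the dependence on the number $s$ of defining polynomials polynomial of degree $n$ rather than exponential. A naive reduction --- for instance introducing slack variables $y_i$ with $y_i^{2}=P_i$ to realize $\{P_1\ge0,\dots,P_s\ge0\}$ as the projection of a single algebraic set in $\mathbb R^{n+s}$ and then applying the $s=1$ bound there --- costs a factor $d^{O(s)}$ and is useless here; the inductive argument, which pushes the equality strata onto ever lower-dimensional varieties, is what replaces $d^{O(s)}$ by $s^{O(n)}$. The remaining technical points, non-compactness and non-smoothness, are dealt with respectively by intersecting with large balls and by the generic perturbation $Q\mapsto Q-\tau$, both arranged so as to destroy no connected component of any cell.
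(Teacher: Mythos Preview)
The paper does not prove Proposition~\ref{p4.2}; it is stated with a citation to \cite{B} and used as a black box. So there is no ``paper's own proof'' to compare against --- your proposal is in effect a sketch of the argument behind the cited result.

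As such a sketch, your outline is in the right spirit and correctly isolates the key point: the naive Milnor--Thom bound applied to a single auxiliary polynomial built from $P_1,\dots,P_s$ gives $d^{O(s)}$, and the work is in replacing this by $(sd)^{O(n)}$. Your double induction on $(n,s)$ is the standard route; one caveat is that the step ``the parts with $P_s\ne0$ are controlled by the cells of $\mathcal P'$ combined with the $s=1$ bound for $\{P_s=0\}$'' hides real content. A component of a $\mathcal P'$-cell can be cut by $\{P_s=0\}$ into many pieces, and bounding those pieces requires knowing the number of components of $\{P_s=0\}$ \emph{inside} each $\mathcal P'$-cell, not just globally. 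The usual fix is to bound, for each realizable sign condition $\sigma$ of $\mathcal P'$, the components of $\{P_s=0\}\cap\mathcal R(\sigma)$ via the inductive hypothesis on the hypersurface (dimension drop), and then observe that each new component created in $\{P_s>0\}\cap\mathcal R(\sigma)$ or $\{P_s<0\}\cap\mathcal R(\sigma)$ is adjacent to such a component of the zero set (or to the original component count of $\mathcal R(\sigma)$). If you write this carefully the recursion closes and gives $\sum_{j\le n}\binom{s}{j}O(d)^{n}$ as you claim. For the present paper, though, no proof is expected: the proposition is quoted from the literature.
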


A more advanced part of the theory of semi-algebraic sets is the following triangulation theorem.
\begin{thm}[\cite{G}]\label{t4.3}
For any positive integers $r,n$, there exists a constant $C=C(n,r)$ with the following property:
Any semi-algebraic set $S\subset[0,1]^{n}$ can be triangulated into $N\lesssim({\rm deg} S+1)^{C}$ simplices,
where for every closed $k$-simplex $\Delta\subset S$, there exists a homeomorphism $h_{\Delta}$ of the regular simplex $\Delta^{k}\subset\mathbb{R}^{k}$ with unit edge length onto
$\Delta$ such that $\|D_rh_\Delta\|\leq1$.
\end{thm}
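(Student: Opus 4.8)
The plan is to obtain the theorem by combining an \emph{effective cylindrical algebraic decomposition} (CAD) of $[0,1]^{n}$ adapted to $S$ with the Yomdin--Gromov $C^{r}$-reparametrization lemma, and then running a combinatorial refinement that turns the decomposition into a genuine simplicial complex. Everything is organized as an induction on the ambient dimension $n$; the case $n=1$ is elementary, since a semi-algebraic subset of $[0,1]$ cut out by polynomials of degree $\le\deg S$ is a union of $\lesssim(\deg S+1)$ points and open intervals, each of which is affinely a simplex with unit-edge model and controlled $C^{r}$ norm.

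First I would fix a defining family $\mathcal P=\{P_{1},\dots,P_{s}\}$ with $\deg P_{i}\le\deg S$ and build a CAD of $[0,1]^{n}$ sign-invariant for $\mathcal P$: a partition into semi-algebraic cells, each a graph or a band between graphs of continuous semi-algebraic functions over a cell of a CAD of $[0,1]^{n-1}$, on which every $P_{i}$ has constant sign, so that $S$ is a union of cells. By the quantitative elimination bounds of Collins and of Basu--Pollack--Roy (the same estimates that underlie Propositions \ref{p4.1} and \ref{p4.2}), the number of cells is $\lesssim(\deg S+1)^{C}$ with $C=C(n)$, and all auxiliary polynomials produced at successive projection stages have degree bounded by a fixed power of $\deg S$. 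Each cell carries a canonical semi-algebraic homeomorphism from an open cube $(0,1)^{k}$.

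Next I would promote these cube parametrizations to $C^{r}$ maps with derivatives bounded by $1$. This is the analytic core: the cell maps are merely semi-algebraic, and their derivatives blow up along the zero sets of the $P_{i}$ and of the successive discriminants and subresultants. Following Yomdin and Gromov, one refines $[0,1]^{n}$, by an induction on the total degree, into $\lesssim(\deg S+1)^{C(n,r)}$ semi-algebraic pieces on each of which the relevant algebraic functions admit a reparametrization $\psi\colon(0,1)^{k}\to[0,1]^{n}$ with $\|D_{j}\psi\|\le1$ for all $j\le r$; the inductive step isolates the (semi-algebraic, bounded-degree) region where some $D_{j}$ exceeds $1$ and rescales. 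Intersecting this refinement with the CAD keeps the number of pieces polynomial in $\deg S$.

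Finally I would assemble a simplicial complex. Using the induction hypothesis, triangulate the CAD of $[0,1]^{n-1}$ — legitimate because the projections of our cells are semi-algebraic of degree $\lesssim(\deg S+1)^{C}$ by Proposition \ref{p4.1} — then lift: over each base simplex the fiber is a fixed number of graphs and bands, and a standard prism-to-simplex subdivision, performed with compatible vertex orderings so that subdivisions agree on shared faces, yields finitely many simplices in $[0,1]^{n}$, still $\lesssim(\deg S+1)^{C}$ in number, whose union is $[0,1]^{n}$ and which refine $S$. For each closed $k$-simplex $\Delta$ lying in $S$, composing the affine identification of $\Delta$ with the relevant face of its parametrizing cube with the reparametrization $\psi$ gives a homeomorphism $\Delta^{k}\to\Delta$; one last subdivision of $\Delta$ into $\lesssim(\deg S+1)^{C}$ sub-simplices absorbs the affine distortion introduced by normalizing to unit edge length and arranges $\|D_{r}h_{\Delta}\|\le1$. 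Collecting all exponents yields $C=C(n,r)$. The main obstacle is that the two demands pull against each other: face-compatibility of the subdivisions is a global combinatorial constraint, while $\|D_{r}h_{\Delta}\|\le1$ is a local analytic one obtained by \emph{more} subdivision, and carrying both through the projection and reparametrization inductions without losing the polynomial dependence on $\deg S$ is the delicate part; the rest is degree bookkeeping.
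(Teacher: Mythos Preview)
The paper does not give a proof of Theorem~\ref{t4.3}; it is quoted from Gromov \cite{G} and invoked only through Corollary~\ref{c4.4}. There is thus no in-paper argument to compare against.

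Your sketch is in the spirit of the original Yomdin--Gromov argument and its later clean treatments (Burguet; Binyamini--Novikov): an effective CAD supplies the cell structure and the polynomial-in-degree complexity bounds, the $C^{r}$-reparametrization lemma tames the derivatives of the cell maps, and an inductive prism-to-simplex subdivision over a triangulation of the base assembles an honest simplicial complex. One caution: the step ``Following Yomdin and Gromov, one refines $[0,1]^{n}$ into $\lesssim(\deg S+1)^{C(n,r)}$ pieces on each of which $\|D_{j}\psi\|\le1$'' is exactly the analytic heart of the theorem, and in a self-contained proof that degree induction must be carried out explicitly rather than appealed to as a known lemma; otherwise the argument is close to circular. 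You correctly identify the genuine difficulty --- reconciling face-compatibility of the subdivisions with the local $C^{r}$ refinements while keeping the total count polynomial in $\deg S$ --- and this is indeed where the substance of Gromov's argument lies.
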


\begin{cor}[Corollary 9.6 in \cite{B05}]\label{c4.4}
 Let $S\subset[0,1]^{n}$ be semi-algebraic of degree $B$.
 Let $\epsilon>0, \ {\rm mes}_nS<\epsilon^{n}$. Then $S$ may be covered by at most $B^{C}(\frac{1}{\epsilon})^{n-1} \epsilon$-balls.
\end{cor}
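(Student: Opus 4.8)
The plan is to derive the covering bound from the triangulation theorem, Theorem \ref{t4.3}, treating the top-dimensional simplices of the triangulation (where the measure hypothesis is used) separately from the lower-dimensional ones. We may assume $\epsilon$ is smaller than a dimensional constant, since otherwise a single ball covers $[0,1]^{n}$. Apply Theorem \ref{t4.3} with $r=1$: the set $S$ is triangulated into $N\lesssim(\deg S+1)^{C}\lesssim B^{C}$ simplices, and for each closed $k$-simplex $\Delta\subset S$ there is a homeomorphism $h_{\Delta}$ of the regular unit-edge $k$-simplex $\Delta^{k}\subset\mathbb{R}^{k}$ onto $\Delta$ with $\|Dh_{\Delta}\|\leq1$; in particular each $h_{\Delta}$ is $1$-Lipschitz. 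Since a regular unit-edge $k$-simplex has diameter $1$, it can be covered by $\lesssim_{n}(1/\epsilon)^{k}$ balls of radius $\epsilon/4$; applying the $1$-Lipschitz map $h_{\Delta}$ to such a cover shows that any closed $k$-simplex of the triangulation is covered by $\lesssim_{n}(1/\epsilon)^{k}$ balls of radius $\epsilon/2$.

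For the simplices of dimension $k\leq n-1$ this already yields a cover by $\lesssim_{n}(1/\epsilon)^{n-1}$ balls of radius $\epsilon/2$ each, and since there are $\lesssim B^{C}$ of them, their union is covered by $\lesssim_{n}B^{C}(1/\epsilon)^{n-1}$ balls of radius $\epsilon$. Now let $\Delta$ be an $n$-dimensional simplex of the triangulation, and overlay the lattice of closed cubes of side $s=\epsilon/(2\sqrt{n})$, so that each such cube lies in a ball of radius $\epsilon/2$. If a lattice cube $Q$ meets $\Delta$ but is not contained in $\Delta$, then $Q$ contains a point of $\Delta$ and a point outside $\Delta$, hence $Q$ meets the topological boundary $\partial\Delta=h_{\Delta}(\partial\Delta^{n})$; as $\partial\Delta^{n}$ is a union of $n+1$ regular $(n-1)$-simplices, the estimate of the previous paragraph covers $\partial\Delta$ by $\lesssim_{n}(1/\epsilon)^{n-1}$ balls of radius $\epsilon/2$, and consequently at most $\lesssim_{n}(1/\epsilon)^{n-1}$ lattice cubes meet $\partial\Delta$. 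Every remaining lattice cube meeting $\Delta$ is contained in $\Delta$ and contributes $s^{n}$ to $\mathrm{mes}_{n}(\Delta)$, so there are at most $\mathrm{mes}_{n}(\Delta)/s^{n}$ of them. Hence $\Delta$ is covered by $\lesssim_{n}\bigl(\mathrm{mes}_{n}(\Delta)/\epsilon^{n}+(1/\epsilon)^{n-1}\bigr)$ balls of radius $\epsilon/2$.

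Finally, summing over the $n$-dimensional simplices, which have pairwise disjoint interiors and lie in $S$, gives $\sum_{\dim\Delta=n}\mathrm{mes}_{n}(\Delta)\leq\mathrm{mes}_{n}(S)<\epsilon^{n}$, so $\sum_{\dim\Delta=n}\mathrm{mes}_{n}(\Delta)/\epsilon^{n}<1\leq(1/\epsilon)^{n-1}$; together with the boundary terms, of which there are $\lesssim_{n}B^{C}(1/\epsilon)^{n-1}$ in total (there being $\lesssim B^{C}$ top simplices), the union of the $n$-dimensional simplices is covered by $\lesssim_{n}B^{C}(1/\epsilon)^{n-1}$ balls of radius $\epsilon$. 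Combining with the bound for the lower-dimensional simplices and absorbing the dimensional constants into the exponent, $S$ is covered by at most $B^{C}(1/\epsilon)^{n-1}$ balls of radius $\epsilon$, as claimed. The one step that needs care is the top-dimensional case, where the hypothesis $\mathrm{mes}_{n}S<\epsilon^{n}$ must be played off against the boundary-layer count so as not to lose a factor $1/\epsilon$; for the simplices of dimension $\leq n-1$ the estimate is immediate from the triangulation together with the Lipschitz control on the $h_{\Delta}$.
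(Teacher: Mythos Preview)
Your argument is correct and follows the intended route: the paper does not give its own proof of this corollary but quotes it from \cite{B05}, where it is derived from the triangulation theorem (stated here as Theorem~\ref{t4.3}), exactly as you do. Your handling of the $n$-dimensional simplices---splitting the covering cubes into those meeting $\partial\Delta$ (controlled by the $(n-1)$-dimensional covering of the Lipschitz image of $\partial\Delta^{n}$) and those contained in $\Delta$ (controlled by volume, then summed using $\sum_{\dim\Delta=n}\mathrm{mes}_{n}\Delta\le\mathrm{mes}_{n}S<\epsilon^{n}$)---is precisely the point where the hypothesis on $\mathrm{mes}_{n}S$ enters, and you treat it cleanly.
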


Finally, we will make essential use of the following transversality property.

\begin{lem}[(1.5) in \cite{B07}]\label{l4.5}
 Let $S\subset[0,1]^{n=n_1+n_2}$ be a semi-algebraic set of degree $B$ and
 \begin{equation}\label{c2}
{\rm mes}_{n}S<\eta, \quad \log B\ll\log\frac{1}{\eta},\quad \epsilon>\eta^{\frac{1}{n}},
 \end{equation}
 denote $(x,y)\in[0,1]^{n_{1}}\times[0,1]^{n_{2}}$ the product variable.

 Then there is a decomposition $S=S_1\cup S_2$,
 $S_1$ satisfying
 \begin{equation}\label{c3}
{\rm mes}_{n_1}({\rm Proj}_x S_1)<B^{C}\epsilon
 \end{equation}
 and $S_2$ satisfying the transversality property
\begin{equation}\label{c4}
 {\rm mes}_{n_2}(S_2\cap L)<B^{C}\epsilon^{-1}\eta^{\frac{1}{n}}
\end{equation}
for any $n_2$-dimensional hyperplane $L$ such that $\max\limits_{1\leq j\leq n_1}|{\rm Proj}_L(e_j)|<\frac{\epsilon}{100}$, where $\{e_j|1\leq j\leq n_1\}$ are $x$-coordinate vectors.
\end{lem}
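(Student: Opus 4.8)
The plan is to follow the standard transversality argument of Bourgain, running an induction on $n_1$ (the dimension of the $x$-variable) with $n_2$ fixed. For $n_1 = 0$ there is nothing to prove (take $S_1 = \emptyset$, $S_2 = S$; the hyperplane condition is vacuous and the measure bound on $S_2 \cap L$ is just ${\rm mes}_{n_2} S < \eta < B^C \epsilon^{-1}\eta^{1/n}$ since $\epsilon > \eta^{1/n}$). For the inductive step, I would decompose $S$ according to the behavior of the ``vertical'' fibers over the $x_1$-axis.

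First I would apply the triangulation theorem, Theorem \ref{t4.3}, to write $S$ as a union of at most $B^C$ simplices $\Delta$, each carrying a homeomorphism $h_\Delta \colon \Delta^k \to \Delta$ with $\|Dh_\Delta\| \le 1$. Work on one such simplex at a time; since there are only $B^C$ of them, losses of this size are affordable. On a fixed simplex, consider $\partial_{x_1}$ of the first coordinate of $h_\Delta^{-1}$, or more operationally, stratify $\Delta$ into the ``flat'' part where the tangent space is nearly orthogonal to $e_1$ (more precisely, where any unit tangent vector $v$ satisfies $|\langle v, e_1\rangle| < \epsilon/100$) and the ``steep'' part where some tangent direction has a substantial $e_1$-component. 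On the flat part, the $x_1$-fibers are short: Fubini together with the bound ${\rm mes}_n S < \eta$ forces the projection onto the remaining variables $(x_2,\dots,x_{n_1},y)$ to be controlled, and one peels off a piece whose $x$-projection has measure $\lesssim B^C \epsilon$, dumping it into $S_1$; the residual part has all its fibers over $(x_2,\dots,x_{n_1})$-directions transverse to $e_1$, so after projecting away $x_1$ one gets a semi-algebraic set in dimension $n-1 = (n_1 - 1) + n_2$ (degree still $B^{C}$ by Proposition \ref{p4.1}) to which the induction hypothesis applies, with $\eta$ replaced by (essentially) $\eta/\epsilon$ and $\epsilon$ kept the same. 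On the steep part, the set is a graph over the $(x_1)$-direction with bounded slope transversality built in, and here one directly verifies the transversality estimate \eqref{c4}: any hyperplane $L$ with $\max_j |{\rm Proj}_L(e_j)| < \epsilon/100$ is nearly parallel to the $y$-space, hence meets the steep stratum in a set whose $n_2$-measure is bounded by the slope times the total measure, giving the $B^C \epsilon^{-1}\eta^{1/n}$ bound after accounting for the at most $B^C$ simplices and connected components (Proposition \ref{p4.2}). Collecting the $S_1$-pieces from all simplices and all stages of the induction keeps the total $x$-projection measure at $B^C \epsilon$ since $n_1$ is fixed and the number of stages is bounded.

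The bookkeeping I expect to be delicate is tracking how the parameters $\eta, \epsilon, B$ degrade through the $n_1$ steps of the induction: each projection multiplies the degree by a dimension-dependent power, and the effective $\eta$ is divided by $\epsilon$ at each step, so one needs the hypothesis $\log B \ll \log(1/\eta)$ and $\epsilon > \eta^{1/n}$ to guarantee that after all $n_1$ steps the remaining ``volume budget'' $\eta/\epsilon^{n_1}$ is still at least $\eta^{n_2/n} = (\eta^{1/n})^{n_2}$, which is exactly what the base case of the induction needs. This is where the exponents $1/n$ in \eqref{c2} and \eqref{c4} are forced. The main obstacle is making the ``flat vs.\ steep'' dichotomy quantitatively clean on each simplex: the Gromov triangulation only gives $\|Dh_\Delta\| \le 1$, not a two-sided bound, so one has to argue that the bad locus where $h_\Delta$ degenerates in the $x_1$-direction is itself semi-algebraic of controlled degree and can be absorbed into $S_1$ — this is the technical heart, but it is exactly the content of the cited arguments in \cite{B07} and I would follow those. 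Everything else is Fubini, the projection and component bounds from Section 4, and careful tracking of constants $C = C(n)$.
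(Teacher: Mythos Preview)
The paper does not prove Lemma~\ref{l4.5}; it is quoted verbatim from Bourgain~\cite{B07} (labeled ``(1.5) in \cite{B07}'') and used as a black box in the proof of Lemma~\ref{l4.6}. There is therefore no proof in this paper to compare your proposal against.

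For what it is worth, your sketch is in the spirit of Bourgain's original argument in \cite{B07} --- the Yomdin--Gromov triangulation (Theorem~\ref{t4.3}) followed by a flat/steep dichotomy and an induction reducing $n_1$ --- which is the standard route to this decomposition. But since the present paper treats the lemma purely as a citation, the appropriate comparison is to \cite{B07} itself, not to anything written here.
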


Now we can prove the following lemma.
\begin{lem}\label{l4.6}
 Let $S\subset[0,1]^{d+1}$ be a semi-algebraic set of degree $B$ such that
 \begin{equation}\label{c5}
{\rm mes}S<e^{-B^{\sigma}}, \sigma>0.
 \end{equation}

Let $M$ satisfy
\begin{equation}\label{c6}
\log\log M\ll\log B\ll\log M.
\end{equation}

Then for all $x\in \mathbb{T}^{d}$,
\begin{equation}\label{c7}
{\rm mes}\left[\omega\in\mathbb{T}\Big\lvert(\omega,T^{j}_{\omega}x)\in S,\ \exists j\sim M\right]<M^{-c},c>0.
\end{equation}
\end{lem}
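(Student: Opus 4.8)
The plan is to decompose $S$ using the transversality Lemma~\ref{l4.5}, to dispose of the "transverse" piece by summing over the orbit index $j$, and to reduce the remaining "spread" piece to the same statement in dimension $d-1$, thus inducting on $d$. Two structural facts are used at the outset. First, since $\log B\gg\log\log M$ we have $B^{\sigma}\gg\log M$, so $\eta:=\operatorname{mes}S=e^{-B^{\sigma}}$ is smaller than any fixed power of $M$; this slack is what makes the estimates close. Second, writing the coordinates of $[0,1]^{d+1}$ as $(\omega,x_{1},\dots,x_{d})$ and using \eqref{2.3}, for each fixed $x$ and $j$ the orbit map $\Phi_{j}(\omega):=(\omega,T^{j}_{\omega}x)$ is affine in $\omega$: $\Phi_{j}(\omega)=\omega v_{j}+c_{j}$ with $v_{j}=\bigl(1,\binom{j}{d},\binom{j}{d-1},\dots,\binom{j}{1}\bigr)\in\Z^{d+1}$ and $c_{j}=\Phi_{j}(0)$. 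As the entry $\binom{j}{d}\sim M^{d}$ dominates every other entry of $v_{j}$, the curve $\Phi_{j}$ is very nearly parallel to the $x_{1}$-axis: for the line $L_{j}$ through $\Phi_{j}$ in direction $v_{j}$ one has $|\operatorname{Proj}_{L_{j}}e_{\omega}|\sim M^{-d}$ and $|\operatorname{Proj}_{L_{j}}e_{x_{i}}|\sim M^{-(i-1)}$ for $2\le i\le d$.

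Next, apply Lemma~\ref{l4.5} to $S$ with $n_{1}=d$ (the coordinates $\omega,x_{2},\dots,x_{d}$), $n_{2}=1$ (the coordinate $x_{1}$), and $\epsilon=M^{-1+\kappa_{0}}$ for a small fixed $\kappa_{0}>0$; then $\epsilon>\eta^{1/(d+1)}$ and $\epsilon>100\,d\,M^{-1}$ both hold, the latter guaranteeing that the lines $L_{j}$ are of the type for which the transversality bound applies. Write $S=S_{1}\cup S_{2}$. For the transverse part $S_{2}$: for each $j\sim M$ the curve $\Phi_{j}$ closes up to a geodesic of length $|v_{j}|\sim M^{d}$ in $\T^{d+1}$, and lifted to $\R^{d+1}$ it meets $\lesssim M^{d}$ unit cells, inside each of which it is a sub-segment of an admissible line; hence $\operatorname{mes}_{1}(\Phi_{j}\cap S_{2})\lesssim M^{d}\cdot B^{C}\epsilon^{-1}\eta^{1/(d+1)}$, and dividing by the arclength $|v_{j}|\sim M^{d}$ gives $\operatorname{mes}\{\omega:\Phi_{j}(\omega)\in S_{2}\}\lesssim B^{C}\epsilon^{-1}\eta^{1/(d+1)}$. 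Summing over the $\sim M$ values of $j$ and using $\epsilon^{-1}\le M$ together with $\eta^{1/(d+1)}\ll M^{-K}$ for every fixed $K$, the contribution of $S_{2}$ to the set in \eqref{c7} has measure $<M^{-c}$.

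For the spread part $S_{1}$ one has $\operatorname{mes}_{d}\bigl(\operatorname{Proj}_{(\omega,x_{2},\dots,x_{d})}S_{1}\bigr)<B^{C}\epsilon$. The key observation is that dropping the first coordinate converts the skew shift $T$ on $\T^{d}$ into the skew shift $T'$ on $\T^{d-1}$: $\bigl((T^{j}_{\omega}x)_{2},\dots,(T^{j}_{\omega}x)_{d}\bigr)=(T')^{j}_{\omega}x'$ with $x'=(x_{2},\dots,x_{d})$. Thus if $\Phi_{j}(\omega)\in S_{1}$ then $(\omega,(T')^{j}_{\omega}x')$ lies in $S^{(1)}:=\operatorname{Proj}_{(\omega,x_{2},\dots,x_{d})}S_{1}\subset[0,1]^{d}$, a semi-algebraic set of degree $\le B^{C}$ (Proposition~\ref{p4.1}), and this branch of the bad $\omega$-set is contained in the set occurring in \eqref{c7} for $S^{(1)}$ and $T'$ in dimension $d-1$. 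This is precisely the inductive step; the base case $d=1$ (the ordinary rotation $\omega\mapsto\omega$) is the classical semi-algebraic elimination and follows directly from Lemma~\ref{l4.5}. Where the remaining levels need an a~priori count of orbit visits one invokes Lemma~\ref{l2.1} and Remark~\ref{r2.2} after excluding from $\T$ the $\omega$ that are too well approximable at scale $M$, a set of measure $<M^{-c}$.

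The main obstacle is the propagation of the quantitative estimates through the $\le d$ inductive steps. The descended set $S^{(1)}$ is only polynomially small in $M$, of order $\epsilon\sim M^{-1}$, this loss being forced by the fact that the orbit curves are only $M^{-1}$-flat in the $x_{2}$-direction; yet the transversality step at the next level again wants the measure as small as possible, and loses a further factor $M$ from the summation over $j$ and a factor $M$ from $\epsilon^{-1}$. One must therefore choose the exact shape of the inductive statement and the parameters $\epsilon$ at each level so that all such polynomial losses are absorbed against the super-polynomial smallness of $\eta$ carried from the top level, and control simultaneously the (small) measure of the Diophantine exceptional set. Carrying out this bookkeeping is the technical heart of the argument; the remaining ingredients (the two special cases of Lemma~\ref{l4.5}, the reduction of $T$ to $T'$, and the geodesic-counting for $S_{2}$) are as above.
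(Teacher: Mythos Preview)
Your first decomposition step and the handling of the transverse piece $S_{2}$ are fine, but the induction on $d$ that you propose for the spread piece $S_{1}$ does not close, and this is not a matter of bookkeeping. Once you replace $S$ by its projection $S^{(1)}=\operatorname{Proj}_{(\omega,x_{2},\dots,x_{d})}S_{1}$, the only measure bound you have is $\operatorname{mes}_{d}S^{(1)}<B^{C}\epsilon\sim M^{-1+}$; the super-polynomial smallness $\eta=e^{-B^{\sigma}}$ is genuinely gone, since projections do not preserve it. At the next level you must again apply Lemma~\ref{l4.5}, now to $S^{(1)}$, and the transverse piece gives, after summing over $j\sim M$, an $\omega$-measure of order $M\cdot\epsilon_{1}^{-1}\cdot(\operatorname{mes}S^{(1)})^{1/d}\gtrsim M\cdot\epsilon_{1}^{-1}\cdot M^{-1/d}$. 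The transversality constraint from the $e_{x_{3}}$-direction forces $\epsilon_{1}\gtrsim M^{-1}$, and the hypothesis of Lemma~\ref{l4.5} forces $\epsilon_{1}\gtrsim M^{-1/d}$; with either choice the resulting bound is $\gtrsim M^{1-1/d}$, which is useless. No choice of the ``shape of the inductive statement'' repairs this: the statement you would need at level $d-1$ is exactly the lemma for a set of measure $\sim M^{-1}$, and the calculation above shows that statement is false in general.

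The paper avoids this trap by \emph{slicing} rather than projecting. It applies Lemma~\ref{l4.5} with $n_{1}=1$ (a single coordinate at a time: first $\omega$, then $x_{d}$, then $x_{d-1}$, \dots), so that the transverse piece at each stage is an intersection $S\cap Q_{m_{1},\dots,m_{i}}$ with a $(d{+}1{-}i)$-plane carrying the orbit, and by~\eqref{c4} its measure is bounded by a power of the previous slice measure, hence inductively by $\eta^{1/(d+2)^{i}}$; the super-polynomial smallness is therefore propagated through all $d$ steps. The residual spread pieces $S^{1}_{m_{1},\dots,m_{i}}$, which are only $M^{-1+}$-small in one coordinate projection, are \emph{not} handled by induction on $d$ but by a separate geometric contradiction: one shows that if their total contribution were large, then for many distinct $j$ the hypercylinders $\mathcal{C}^{(j)}$ in directions $\xi_{j}\parallel(1,\binom{j}{d},\dots,j)$ would have to overlap too much, and the Vandermonde determinant in~\eqref{c30} forces these directions to be in general position, contradicting the measure bound~\eqref{c28}. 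Your reduction $T\mapsto T'$ is a nice structural observation, but it cannot substitute for this second mechanism.
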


\begin{proof}

For $x^{0}=(x_1^{0},\ldots,x_d^{0})\in \mathbb{T}^{d}$, we study the intersection of $S\subset[0,1]^{d+1}$ and sets
\begin{equation}\label{c8}
\{(\omega,x_1,\ldots,x_d)|\omega\in[0,1]\},
\end{equation}
where $x_i=(T_{\omega}^{j}x^{0})_{i}=x_{i}^{0}+jx_{i+1}^{0}+\cdots+\binom{j}{d-i}x_{d}^{0}+\binom{j}{d-i+1}\omega$, $1\leq i\leq d$ are considered (mod 1).

By (\ref{c5}), (\ref{c6}), we have
\begin{equation}\label{c9}
{\rm mes}_{d+1}S<\eta=e^{-B^{\sigma}}, \quad\log B\ll\log M\ll\log\frac{1}{\eta}.
\end{equation}

Take $\epsilon=M^{-1+}$ and apply Lemma \ref{l4.5}, $S=S_1\cup S_2$.
Since ${\rm mes}_{1}({\rm Proj}_\omega S_1)<B^{C}M^{-1+}=M^{-1+}$, restriction of $\omega$ permits us to replace $S$ by $S_2$ satisfying
\begin{equation}\label{c10}
{\rm mes}_d(S_2\cap L)<B^{C}\epsilon^{-1}\eta^{\frac{1}{d+1}}<\eta^{\frac{1}{d+2}},
\end{equation}
whenever $L$ is a $d$-dimensional hyperplane satisfying $|{\rm Proj}_L(e_0)|<\frac{\epsilon}{100}$, where $e_0$ is the $\omega$-coordinate vector.

Fixing $j$,(\ref{c8}) considered as subset of $[0,1]^{d+1}$ lies in the union of the parallel $d$-dimensional hyperplanes
\begin{equation}\label{c11}
Q_{m_1}^{(j)}=\left[\omega=\frac{x_d}{j}\right]-\frac{m_{1}+x_d^{0}}{j}e_0,\quad |m_{1}|<M.
\end{equation}

By (\ref{c10}), we have
\begin{equation}\label{c12}
{\rm mes}_d(S\cap Q_{m_{1}})<\eta^{\frac{1}{d+2}}.
\end{equation}

Fixing $m_{1}$, consider the semi-algebraic set $S\cap Q_{m_{1}}$ and its intersection with the parallel $(d-1)$-dimensional hyperplanes
\begin{equation}\label{c13}
Q_{m_{1},m_{2}}^{(j)}=Q_{m_{1}}\cap\left[x_d=\frac{2}{j-1}x_{d-1}-\frac{2}{j-1}\left(x_{d-1}^{0}+\frac{j+1}{2}x_d^{0}+m_{2}\right)\right],\quad |m_{2}|<M.
\end{equation}

Take $\epsilon=M^{-1+}$ and apply Lemma \ref{l4.5} in $Q_{m_{1}}$, $S\cap Q_{m_{1}}=S_{m_{1}}^{1}\cup S_{m_{1}}^{2}$,
where
\begin{equation}\label{c14}
\mbox{${\rm Proj}_{x_{d}}S_{m_{1}}^{1}$ is a union of at most $B^{C}$ intervals of measure at most $B^{C}M^{-1+}$,}
\end{equation}
and by (\ref{c12}), we have
\begin{equation}\label{c15}
{\rm mes}_{d-1}(S_{m_{1}}^{2}\cap Q_{m_{1},m_{2}})<B^{C}M\eta^{\frac{1}{d(d+2)}}<\eta^{\frac{1}{(d+2)^{2}}}.
\end{equation}

Fixing $m_{2}$, consider the semi-algebraic set $S_{m_{1}}^{2}\cap Q_{m_{1},m_{2}}$ and its intersection with the parallel $(d-2)$-dimensional hyperplanes
\begin{equation*}
Q_{m_{1},m_{2},m_{3}}^{(j)}=Q_{m_{1},m_{2}}\cap\left[x_{d-1}=\frac{3}{j-2}x_{d-2}-\frac{3}{j-2}\left(x_{d-2}^{0}+\cdots+\frac{j(j+1)}{6}x_{d}^{0}+m_{3}\right)\right],
\end{equation*}
where $|m_{3}|<M$.

Take $\epsilon=M^{-1+}$ and apply Lemma \ref{l4.5} in $Q_{m_{1},m_{2}}$, $S_{m_{1}}^{2}\cap Q_{m_{1},m_{2}}=S_{m_{1},m_{2}}^{1}\cup S_{m_{1},m_{2}}^{2}$,
where
\begin{equation*}
\mbox{${\rm Proj}_{x_{d-1}}S_{m_{1},m_{2}}^{1}$ is a union of at most $B^{C}$ intervals of measure at most $B^{C}M^{-1+}$,}
\end{equation*}
and by (\ref{c15}), we have
\begin{equation*}
{\rm mes}_{d-2}(S_{m_{1},m_{2}}^{2}\cap Q_{m_{1},m_{2},m_{3}})<\eta^{\frac{1}{(d+2)^{3}}}.
\end{equation*}

Repeat the argument above, fixing $m_{i}, 2\leq i\leq d-1$, consider the semi-algebraic set $S_{m_{1},\ldots,m_{i-1}}^{2}\cap Q_{m_{1},\ldots,m_{i}}$
and its intersection with the parallel $(d-i)$-dimensional hyperplanes
\begin{equation}\label{c16}
Q_{m_{1},\ldots,m_{i+1}}^{(j)}=Q_{m_{1},\ldots,m_{i}}\cap\left[x_{d-i+1}=\frac{i+1}{j-i}x_{d-i}-\frac{i+1}{j-i}\left(x_{d-i}^{0}+\cdots+\frac{1}{i+1}\binom{j+1}{i}x_{d}^{0}+m_{i+1}\right)\right],
\end{equation}
where $|m_{i+1}|<M$.

Take $\epsilon=M^{-1+}$ and apply Lemma \ref{l4.5} in $Q_{m_{1},\ldots,m_{i}}$, 
$S_{m_{1},\ldots,m_{i-1}}^{2}\cap Q_{m_{1},\ldots,m_{i}}=S_{m_{1},\ldots,m_{i}}^{1}\cup S_{m_{1},\ldots,m_{i}}^{2}$,
where
\begin{equation}\label{c17}
\mbox{${\rm Proj}_{x_{d-i+1}}S_{m_{1},\ldots,m_{i}}^{1}$ is a union of at most $B^{C}$ intervals of measure at most $B^{C}M^{-1+}$,}
\end{equation}
and
\begin{equation}\label{c18}
{\rm mes}_{d-i}(S_{m_{1},\ldots,m_{i}}^{2}\cap Q_{m_{1},\ldots,m_{i+1}})<\eta^{\frac{1}{(d+2)^{i+1}}}.
\end{equation}

Finally, fixing $m_{d-1}$, consider the semi-algebraic set $S_{m_{1},\ldots,m_{d-2}}^{2}\cap Q_{m_{1},\ldots,m_{d-1}}$ and its intersection with the parallel lines
\begin{equation}\label{c19}
Q_{m_{1},\ldots,m_{d}}^{(j)}=Q_{m_{1},\ldots,m_{d-1}}\cap\left[x_{2}=\frac{d}{j-d+1}x_{1}-\frac{d}{j-d+1}\left(x_{1}^{0}+\cdots+\frac{1}{d}\binom{j+1}{d-1}x_{d}^{0}+m_{d}\right)\right],
\end{equation}
where $|m_{d}|<M$.

Take $\epsilon=M^{-1+}$ and apply Lemma \ref{l4.5} in $Q_{m_{1},\ldots,m_{d-1}}$,
$S_{m_{1},\ldots,m_{d-2}}^{2}\cap Q_{m_{1},\ldots,m_{d-1}}=S_{m_{1},\ldots,m_{d-1}}^{1}\cup S_{m_{1},\ldots,m_{d-1}}^{2}$,
where
\begin{equation}\label{c20}
\mbox{${\rm Proj}_{x_{2}}S_{m_{1},\ldots,m_{d-1}}^{1}$ is a union of at most $B^{C}$ intervals of measure at most $B^{C}M^{-1+}$,}
\end{equation}
and
\begin{equation}\label{c21}
{\rm mes}_{1}(S_{m_{1},\ldots,m_{d-1}}^{2}\cap Q_{m_{1},\ldots,m_{d}})<\eta^{\frac{1}{(d+2)^{d}}}.
\end{equation}

Summing (\ref{c21}) over $j,m_{1},\ldots,m_{d}$, the collected contribution in the $\omega$-parameter is less than $M^{-d}M^{d+1}B^{C}M\eta^{\frac{1}{(d+2)^{d}}}<\eta^{\frac{1}{(d+2)^{d+1}}}$.
So, we only need to consider the contribution of $S_{m_{1},\ldots,m_{i}}^{1}$ (\ref{c17}).
We just deal with $S_{m_{1},\ldots,m_{d-1}}^{1}$ below, since for other sets, the method is similar.

If (\ref{c7}) fails, we have
\begin{equation*}
\sum_{j\sim M,|m_{1}|,\ldots,|m_{d}|<M}{\rm mes} \left[{\rm Proj}_\omega {\rm Proj}_{x_2}(S_{m_{1},\ldots,m_{d-1}}^{1}\cap Q_{m_{1},\ldots,m_{d}}^{(j)})\right]>M^{0-},
\end{equation*}
\begin{equation}\label{c22}
\sum_{j\sim M,|m_{1}|,\ldots,|m_{d}|<M}{\rm mes} \left[{\rm Proj}_{x_2}(S_{m_{1},\ldots,m_{d-1}}^{1}\cap Q_{m_{1},\ldots,m_{d}}^{(j)})\right]>M^{d-1-}.
\end{equation}
So, there is a set $J\subset\mathbb{Z}\cap[j\sim M], |J|>M^{1-}$ such that for each $j\in J$, there are at least $M^{d-1-}$ values of $(m_{1},\ldots,m_{d-1})$ satisfying
\begin{equation}\label{c23}
\sum_{|m_{d}|<M}{\rm mes} \left[{\rm Proj}_{x_2}(S_{m_{1},\ldots,m_{d-1}}^{1}\cap Q_{m_{1},\ldots,m_{d}}^{(j)})\right]>M^{-1}.
\end{equation}

By (\ref{c20}), $S_{m_{1},\ldots,m_{d-1}}^{1}\cap Q_{m_{1},\ldots,m_{d}}^{(j)}\neq\emptyset$ for at most $M^{0+}$ values of $m_{d}$. Hence
\begin{equation}\label{c24}
\max_{m_{d}}{\rm mes}_{1}(S\cap Q_{m_{1},\ldots,m_{d}}^{(j)})>M^{0-}.
\end{equation}

For fixed $j$,
\begin{equation}\label{c25}
Q_{m_{1},\ldots,m_{d}}^{(j)}//\xi_{j}//\left(1,\binom{j}{d},\ldots,j\right)^{T},\quad \|\xi_{j}\|=1.
\end{equation}

Denote $S_x$ the intersection of $S$ and the $d$-dimensional hyperplane $[x'=x]$.
From (\ref{c24}), to each $(m_{1},\ldots,m_{d-1})$ we can associate some $m_{d}$, such that
\begin{equation}\label{c26}
\int_{0}^{1}\#\{|m_{1}|,\ldots,|m_{d-1}|<M| S_{x}\cap Q_{m_{1},\ldots,m_{d}}\neq\emptyset\}dx>M^{d-1-}.
\end{equation}

If ${\rm mes}_dS_{x}<\eta^{\frac{1}{2}}$, then $S_{x}\cap Q_{m_{1},\ldots,m_{d}}\neq\emptyset$ implies ${\rm dist}(Q_{m_{1},\ldots,m_{d}},\partial S_{x})<\eta^{\frac{1}{2d}}$,
where $\partial S_{x}$ is a union of at most $B^{C}$ connected $(d-1)$-dimensional algebraic set of degree at most $B^{C}$.
From (\ref{c26}), it follows that there is a fixed $(d-1)$-dimensional algebraic set $\Gamma=\Gamma^{(j)}$ of degree at most $B^{C}$ such that
for $x\in[0,1]$ in a set of measure $>M^{0-}$, there are at least $M^{d-1-} \frac{1}{M}$-separated points that are $\eta^{\frac{1}{2d}}$-close
to both $\partial S_{x}$ and $\Gamma+x\xi_{j}$. Hence $(\Gamma+x\xi_{j})\cap S_{\eta_1}$($\eta_1$-neighborhood of $S,\eta_1=2\eta^{\frac{1}{2d}}$)
contains at least $M^{d-1-} \frac{1}{M}$-separated points. So, ${\rm mes}_{d-1}((\Gamma+x\xi_{j})\cap S_{\eta_1})>M^{0-}$.

The hypercylinder $\mathcal{C}^{(j)}=t\xi_{j}+\Gamma^{(j)}$ satisfies
\begin{equation}\label{c27}
{\rm mes}_{d}(\mathcal{C}^{(j)}\cap S_{\eta_1})>M^{0-}.
\end{equation}

By Corollary \ref{c4.4}, we have
\begin{equation}\label{c28}
{\rm mes}_{d+1}S_{\eta_1}<B^{C}\eta_1.
\end{equation}

Since (\ref{c27}) holds for all $j\in J$, by (\ref{c27}), (\ref{c28}), we have
\begin{equation*}
\sum_{j_{1},\ldots,j_{d+1}\in J}{\rm mes}_{d+1}[\bigcap_{1\leq i\leq d+1}\mathcal{C}_{\eta_1}^{(j_{i})}]>\eta_1M^{d+1-}.
\end{equation*}

So, there are distinct $j_{1},\ldots,j_{d+1}\sim M$ such that
\begin{equation}\label{c29}
{\rm mes}_{d+1}[\bigcap_{1\leq i\leq d+1}\mathcal{C}_{\eta_1}^{(j_{i})}]>\eta_1M^{0-}.
\end{equation}

By (\ref{c25}), using Vandermonde determinant, we have
\begin{equation}\label{c30}
\det [\xi_{j_{1}},\ldots,\xi_{j_{d+1}}]\neq 0,
\end{equation}
for distinct $j_{1},\ldots,j_{d+1}$.
So, the vectors $\xi_{j_{1}},\ldots,\xi_{j_{d+1}}$ are not in any $d$-dimensional hyperplane.
Since $\log M\ll \log\frac{1}{\eta_{1}}$, this leads to a contradiction with (\ref{c29}).

This proves Lemma \ref{l4.6}.
\end{proof}

\section{Proof of Anderson localization}

In this section, we give the proof of Anderson localization as in \cite{BG}.

By application of the resolvent identity, we have the following
\begin{lem}[Lemma 10.33 in \cite{B05}]\label{l5.1}
Let $I\subset\mathbb{Z}$ be an interval of size $N$ and $\{I_{\alpha}\}$ be subintervals of size $M\ll N$. Assume that
\begin{itemize}
\item [(i)] If $k\in I$, then there is some $\alpha$ such that $[k-\frac{M}{4},k+\frac{M}{4}]\cap I\subset I_\alpha$.

\item [(ii)] For all $\alpha$,
\begin{equation*}
\|G_{I_{\alpha}}\|<e^{M^{1-}},
  |G_{I_{\alpha}}(n_1,n_2)|<e^{-c_0|n_1-n_2|},   \ n_1,n_2\in I_{\alpha},|n_1-n_2|>\frac{M}{10}.
\end{equation*}
\end{itemize}
Then
\begin{equation*}
|G_{I}(n_1,n_2)|<e^{-(c_0-)|n_1-n_2|}, \  n_1,n_2\in I,|n_1-n_2|>\frac{N}{10}.
\end{equation*}
\end{lem}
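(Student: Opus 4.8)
The statement is a standard resolvent-identity patching argument, so the plan is to bound $|G_I(n_1,n_2)|$ for a fixed pair $n_1,n_2\in I$ with $|n_1-n_2|>N/10$ by iterating the resolvent identity along a chain of the local intervals $I_\alpha$. First I would fix $n_1$ and, using hypothesis (i), choose for each site $k\in I$ an interval $I_{\alpha(k)}\ni [k-M/4,k+M/4]\cap I$; write $I_\alpha=[a_\alpha,b_\alpha]$. The key algebraic input is the identity: for $k\in I$, if $I_{\alpha(k)}\neq I$ does not contain an endpoint of $I$, then for any $n\in I$,
\begin{equation*}
G_I(k,n)=-\sum_{\substack{k'\in I_{\alpha(k)},\ k''\notin I_{\alpha(k)}\\ |k'-k''|=1}} G_{I_{\alpha(k)}}(k,k')\,A_{k'k''}\,G_I(k'',n),
\end{equation*}
plus the term $G_{I_{\alpha(k)}}(k,n)\chi_{n\in I_{\alpha(k)}}$; since $[k-M/4,k+M/4]\cap I\subset I_{\alpha(k)}$, the boundary sites $k''$ appearing here satisfy $|k-k''|\geq M/4$.

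The main step is the iteration. Starting from $k=n_1$, apply the identity to push the first argument toward $n_2$. Because every jump from $k$ to the boundary site $k''$ of $I_{\alpha(k)}$ covers a distance $\geq M/4$, and because on that jump hypothesis (ii) contributes a factor $\|G_{I_{\alpha(k)}}\|\cdot\sup|A_{k'k''}|\le e^{M^{1-}}\cdot e^{-|k'-k''|}$ while a long off-diagonal jump of length $>M/10$ inside $I_{\alpha(k)}$ contributes $e^{-c_0|k-k'|}$, one accumulates, after $t$ steps, a bound roughly $\big(C e^{M^{1-}} e^{-c_0 M/4}\big)^{?}$ — more precisely each step of net displacement $\ell_s\ge M/4$ costs at most $e^{-c_0\ell_s}e^{M^{1-}}$, and the decay rate $c_0-O(M^{-})$ survives because $M^{1-}/(M/4)\to 0$. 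The iteration is stopped the first time the current site lands in an interval $I_\alpha$ that already contains $n_2$ (or an endpoint interval of $I$, which can only happen within distance $M$ of $\partial I$ and is absorbed into the $(c_0-)$ loss); at that stopping time one applies hypothesis (ii) one last time to reach $n_2$. Summing the telescoping product over all intermediate chains — the number of chains of total length $\sim|n_1-n_2|$ with steps $\ge M/4$ is at most $e^{o(|n_1-n_2|)}$ — gives
\begin{equation*}
|G_I(n_1,n_2)|\le e^{o(|n_1-n_2|)}\,e^{-(c_0-o(1))|n_1-n_2|}<e^{-(c_0-)|n_1-n_2|}.
\end{equation*}

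The main obstacle is purely bookkeeping: one must verify that the subexponential factors — the $e^{M^{1-}}$ norm losses at each of the $O(|n_1-n_2|/M)$ steps, the combinatorial count of chains, and the boundary effects near $\partial I$ where (i) may force $I_\alpha$ to be an endpoint interval — all combine to a total factor $e^{o(|n_1-n_2|)}$, so that they can be swallowed into the arbitrarily small loss in the exponent $c_0-$. Since $|n_1-n_2|>N/10$ and $M\ll N$, the number of steps is large and $\sum_s M^{1-}\ll \sum_s \ell_s$, so this works; the condition $|n_1-n_2|>N/10$ is exactly what guarantees the chain is long enough for the losses to be negligible. I would also remark that the off-diagonal decay $|A_{mn}|<\gamma e^{-|m-n|}$ from (\ref{a5}) is what makes the boundary sums in the resolvent identity converge, with room to spare since $c_0<1$.
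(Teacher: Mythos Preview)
The paper does not prove this lemma; it simply cites it as Lemma~10.33 in \cite{B05} with the remark ``By application of the resolvent identity, we have the following.'' Your proposal is exactly the standard resolvent-identity chaining argument that underlies that reference, so there is nothing to compare against and your outline is on the right track.

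One concrete correction: in your displayed resolvent identity you restrict the boundary sum to $|k'-k''|=1$, which is the nearest-neighbour form. The operators in this paper are long-range (cf.\ (\ref{a2}), (\ref{a5})), so the correct identity keeps the full double sum
\[
G_I(k,n)=G_{I_{\alpha(k)}}(k,n)\chi_{n\in I_{\alpha(k)}}-\sum_{k'\in I_{\alpha(k)}}\ \sum_{k''\in I\setminus I_{\alpha(k)}} G_{I_{\alpha(k)}}(k,k')\,A_{k'k''}\,G_I(k'',n),
\]
and one then splits according to whether $|k-k'|>M/10$ (use the off-diagonal decay in (ii)) or $|k-k'|\le M/10$ (use the norm bound in (ii) together with $|k'-k''|\gtrsim M$ and the exponential decay of $A_{k'k''}$). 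You clearly have this in mind given your closing remark about (\ref{a5}), but the formula as written is inconsistent with it; once you fix the sum, the rest of your bookkeeping (the $O(|n_1-n_2|/M)$ steps each costing $e^{M^{1-}}$, the chain count, and the absorption into $c_0-$) goes through exactly as you describe.
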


Let $T=T_{\omega}$ be the skew shift on $\mathbb{T}^{d}$ with frequency $\omega$ satisfying
\begin{equation}\label{d1}
\| k\omega\|>c| k|^{-2},\quad \forall k\in\mathbb{Z}\setminus\{0\}.
\end{equation}
Fix $x_0\in\mathbb{T}^{d}$.
\begin{equation}\label{d2}
H(x_0)(m,m)=v(T^{m}x_0),
\end{equation}
\begin{equation}\label{d3}
H(x_0)(m,n)=\phi_{m-n}(T^{m}x_0)+\overline{\phi_{n-m}(T^{n}x_0)},\quad m\neq n
\end{equation}
with $v$ and $\phi_{k}$ satisfying (\ref{a3})-(\ref{a5}) and $\gamma$ taken small enough.
Then we have
\begin{thm}\label{t5.2}
For almost all $\omega$ satisfying (\ref{d1}), the lattice operator $H_\omega(x_0)$ satisfies Anderson localization.
\end{thm}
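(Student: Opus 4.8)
The plan is to follow the standard Bourgain--Goldstein argument (as in \cite{BG}, \cite{BGS1}), combining the Green's function estimates of Proposition \ref{p3.2} with the parameter elimination of Lemma \ref{l4.6} in a double-scale bootstrap. First I would fix a generalized eigenfunction: suppose $H_\omega(x_0)\psi = E\psi$ with $\psi$ polynomially bounded, $\psi_0 = 1$, say. The goal is to show $|\psi_n| \to 0$ exponentially. The key mechanism is the \emph{paving property}: if $n$ is large and the interval $I = [n - N, n + N]$ (for a suitable scale $N$ with $\log\log n \ll \log N \ll \log n$) has the property that $E \notin \mathrm{spec}$-badly, i.e. $x_0$ is such that the Green's function $G_I(E)$ obeys the off-diagonal decay \eqref{b12}, then the Poisson formula $\psi_n = -\sum_{(n',n'') \in \partial I} G_I(E)(n,n') A(n',n'')\psi_{n''}$ forces $|\psi_n| \lesssim e^{-cN}\cdot(\text{poly})$. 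So the whole game is to show that for a.e.\ $\omega$, for every large $n$, the translated point $T^n x_0$ (equivalently the interval centered near $n$) is ``good'' for the energy $E$ — but $E$ itself depends on $\omega$ and $x_0$, so it must be eliminated.

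The heart of the argument is the elimination of $E$. First I would record that by Proposition \ref{p3.2}, for each fixed $E$ the bad set $\Omega_N(E) \subset \mathbb{T}^d$ is semi-algebraic of degree $\le N^C$ with $\mathrm{mes}\,\Omega_N(E) < e^{-N^\sigma}$; moreover the dependence on $E$ is also semi-algebraic (polynomial of controlled degree), so $\mathcal{S}_N = \{(E, x) : x \in \Omega_N(E)\} \subset \mathbb{R}\times\mathbb{T}^d$ is semi-algebraic. Restricting $E$ to the compact spectral interval and using the resolvent/Cramer structure, one obtains a semi-algebraic set $S \subset [0,1]^{d+1}$ in the $(\omega, x)$ variables of degree $B \le N^{C}$ and measure $< e^{-N^\sigma} = e^{-B^{\sigma'}}$ capturing those $(\omega, x_0)$ for which there exists an $E$ in the spectrum making the interval of scale $N$ near some $j \sim M$ bad (here the energy is eliminated by a projection, invoking Proposition \ref{p4.1} to keep the degree polynomial). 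Choosing $M$ so that $\log\log M \ll \log B \ll \log M$ — which is exactly \eqref{c6} — Lemma \ref{l4.6} gives $\mathrm{mes}\{\omega : (\omega, T^j_\omega x_0) \in S,\ \exists\, j \sim M\} < M^{-c}$. Then I would sum over a sequence of scales $M = M_s$ with $\log M_{s+1} \sim \log^2 M_s$ (so the bad-$\omega$ measures $M_s^{-c}$ are summable) and apply Borel--Cantelli: for a.e.\ $\omega$ satisfying \eqref{d1}, there is $s_0(\omega)$ such that for all $s \ge s_0$ and all $j \sim M_s$, $T^j_\omega x_0 \notin \Omega_{N_s}(E)$ for every spectral $E$.

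With this in hand I would conclude as follows. Let $E$ be a generalized eigenvalue with eigenfunction $\psi$. Given $n$ large, pick the scale $s$ with $M_s \sim n$, so $N = N_s$ satisfies $\log\log n \ll \log N \ll \log n$; by the Borel--Cantelli step the interval $I_n$ of length $\sim N$ centered at $n$ has good Green's function for this $E$. Since $\psi_0 = 1$ and $\psi$ is bounded, the origin $0$ is ``covered'' and cannot be a good site in the sense needed for decay at $0$ (otherwise Poisson would contradict $\psi_0 = 1$), so we can always find, among the translates, a suitable interval around $n$ and a ``barrier'' configuration separating $n$ from $0$; the resolvent identity (Lemma \ref{l5.1}, used to paste the scale-$N$ Green's functions into the Green's function of a longer interval $[-n^{1+}, n^{1+}]\setminus\{\text{a neighborhood of }0\}$) then yields $|G_{[a,b]}(E)(0,n)| < e^{-c'|n|}$, and Poisson's formula gives $|\psi_n| < C\,n^{C}\, e^{-c'|n|} \le e^{-c''|n|}$ for $|n|$ large. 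Hence every generalized eigenfunction decays exponentially; by Schnol/Berezanskii the spectrum is pure point with exponentially localized eigenfunctions, i.e.\ $H_\omega(x_0)$ satisfies Anderson localization. The main obstacle I expect is not any single estimate but the bookkeeping of the \emph{energy elimination}: verifying that after projecting out $E$ the set $S$ remains semi-algebraic of degree only $N^{O(1)}$ (Proposition \ref{p4.1}) \emph{and} that its measure bound $e^{-N^\sigma}$ survives the projection in the form $e^{-B^\sigma}$ needed for Lemma \ref{l4.6} — this is where the relation $\log B \ll \log M \ll \log(1/\eta)$ is delicate — together with the standard but fiddly task of arranging the barrier/covering so that $0$ is excluded while $n$ is well inside a good block.
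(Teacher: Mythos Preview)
Your architecture is correct and matches the paper's: Proposition \ref{p3.2} for the Green's function estimates, Lemma \ref{l4.6} for the $(\omega,x)$-parameter elimination, Borel--Cantelli over scales $N$ with $N_2=e^{(\log N)^2}$, and Lemma \ref{l5.1} for the paving. The one genuine gap is exactly the one you flag at the end: the measure bound does \emph{not} survive a naive projection in $E$. The set $\{(\omega,x):\exists E\in[-C,C],\ x\in\Omega_N(E)\}$ is typically of full measure, so ``restricting $E$ to the compact spectral interval'' and projecting via Proposition \ref{p4.1} controls the degree but not the measure.

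The paper (and the original Bourgain--Goldstein scheme you cite) fixes this by an \emph{a priori} localization of $E$ that must come \emph{before} the projection, not after. Using your hypothesis $\psi_0=1$ together with the ergodic counting from Section~3, one first finds, for $M=N^{C_0}$ and $L=M^C$, an interval $I\subset[0,L]$ of size $M$ with $T^{n_0}x_0\notin\Omega_M(E)$ for all $n_0\in I\cup(-I)$; Poisson then gives $|\psi_{n_0}|<e^{-M/200}$ on $I$, and feeding this into $1=|\psi_0|\le\|G_{[-j_0,j_0]}\|\cdot\|R_{[-j_0,j_0]}H R_{\mathbb{Z}\setminus[-j_0,j_0]}\psi\|$ forces $\|G_{[-j_0,j_0]}(E,x_0)\|>e^N$, hence ${\rm dist}(E,{\rm spec}\,H_{[-j_0,j_0]}(x_0))<e^{-N}$ for some $|j_0|\le N_1=N^{C_1}$. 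This confines $E$ to a \emph{finite} set $\mathcal{E}_\omega=\bigcup_{|j|\le N_1}{\rm spec}\,H_{[-j,j]}(x_0)$ of cardinality $\lesssim N_1^2$, and the condition $E'\in\mathcal{E}_\omega$ is itself semi-algebraic of degree $N^{O(1)}$ in $(\omega,E')$. Now the projection $S_N=\{(\omega,x):\exists E'\in\mathcal{E}_\omega,\ x\in\Omega_N(E')\}$ has ${\rm mes}(S_N)\le|\mathcal{E}_\omega|\cdot e^{-N^\sigma}<e^{-N^\sigma/2}$, which is exactly the input \eqref{c5} for Lemma \ref{l4.6}. Your ``origin cannot be a good site'' observation is the right idea, but it must be deployed here, to pin down $E$, rather than at the end as a barrier device; once $E$ is confined to $\mathcal{E}_\omega$ the rest of your outline goes through verbatim.
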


\begin{proof}
To establish Anderson localization, it suffices to show that if $\xi=(\xi_n)_{n\in\mathbb{Z}},E\in\mathbb{R}$ satisfy
\begin{equation}\label{d4}
|\xi_n|<C|n|,\quad |n|\rightarrow\infty,
\end{equation}
\begin{equation}\label{d5}
H(x_0)\xi=E\xi,
\end{equation}
then
\begin{equation}\label{d6}
|\xi_n|<e^{-c|n|},\quad |n|\rightarrow\infty.
\end{equation}

Let $M=N^{C_0}, L=M^{C}$. Denote $\Omega\subset\mathbb{T}^{d}$ the set of $x$ such that
\begin{equation*}
  |G_{[-M,M]}(E,x)(m,n)|<e^{M^{1-}-\frac{1}{100}|m-n|\chi_{|m-n|>\frac{M}{10}}}
\end{equation*}
fails for some $|m|,|n|\leq M$. It was shown in Section 3 that
\begin{equation*}
  \#\{1\leq|n|\leq L|T^nx_0\in\Omega\}<L^{1-\delta}.
\end{equation*}
So, we may find an interval $I\subset[0,L]$ of size $M$, such that
\begin{equation*}
T^{n_{0}}x_{0}\notin\Omega,\quad \forall n_{0}\in I\cup(-I).
\end{equation*}
Hence
\begin{equation}\label{d7}
|G_{[n_{0}-M,n_{0}+M]}(E,x_{0})(m,n)|<e^{M^{1-}-\frac{1}{100}|m-n|\chi_{|m-n|>\frac{M}{10}}}, m,n\in [n_{0}-M,n_{0}+M].
\end{equation}
By (\ref{d4}), (\ref{d5}), (\ref{d7}), we have
\begin{equation}\label{d8}
|\xi_{n_0}|\leq\sum_{n'\in [n_{0}-M,n_{0}+M],n''\notin [n_{0}-M,n_{0}+M]}e^{M^{1-}-\frac{1}{100}|n_0-n'|\chi_{|n_0-n'|>\frac{M}{10}}}e^{-|n'-n''|}|\xi_{n''}|<e^{-\frac{M}{200}}.
\end{equation}

Denoting $j_0$ the center of $I$, we have
\begin{equation}\label{d9}
1=|\xi_{0}|\leq\|G_{[-j_0,j_0]}(x_0,E)\|\|R_{[-j_0,j_0]}H(x_0)R_{\mathbb{Z}\setminus[-j_0,j_0]}\xi\|.
\end{equation}
By (\ref{d4}), (\ref{d8}), we have for $|n|\leq j_0$,
\begin{equation}\label{d10}
|(H(x_0)R_{\mathbb{Z}\setminus[-j_0,j_0]}\xi)_{n}|\leq\sum_{|n'|>j_0}e^{-|n-n'|}|\xi_{n'}|<e^{-\frac{M}{400}}+\sum_{|n'|>j_0+\frac{M}{2}}e^{-|n-n'|}|\xi_{n'}|<e^{-\frac{M}{500}}.
\end{equation}
By (\ref{d9}), (\ref{d10}), we have
\begin{equation}\label{d11}
\|G_{[-j_0,j_0]}(x_0,E)\|>e^N.
\end{equation}

So if there is an extended state $\xi,\xi_{0}=1$ with energy $E$, then there is some $j_0, |j_0|<N_1=N^{C_1}$ ($C_1$ is a sufficiently large constant),
such that (by (\ref{d11}))
\begin{equation}\label{d12}
{\rm dist}(E, {\rm spec} H_{[-j_0,j_0]}(x_0))<e^{-N}.
\end{equation}

Denote $\Omega(E)\subset\mathbb{T}^{d}$ the set of $x$ such that
\begin{equation*}
  |G_{[-N,N]}(E,x)(m,n)|<e^{N^{1-}-\frac{1}{100}|m-n|\chi_{|m-n|>\frac{N}{10}}}
\end{equation*}
fails for some $|m|,|n|\leq N$. Let $\mathcal{E}_{\omega}=\bigcup\limits_{| j|\leq N_1}{\rm spec}H_{[-j,j]}(x_0)$.
It follows from (\ref{d12}) that if $x\notin\bigcup\limits_{E'\in\mathcal{E}_{\omega}}\Omega(E')$, then
\begin{equation}\label{d13}
  |G_{[-N,N]}(E,x)(m,n)|<e^{N^{1-}-\frac{1}{100}|m-n|\chi_{|m-n|>\frac{N}{10}}} ,\quad |m|,|n|\leq N.
\end{equation}

Consider the set $S=S_N\subset\mathbb{T}^{d+1}\times\mathbb{R}$ of $(\omega,x,E')$, where
\begin{equation}\label{d14}
 \| k\omega\|>c|k|^{-2},\quad \forall 0<|k|\leq N,
\end{equation}
\begin{equation}\label{d15}
x\in\Omega(E'),
\end{equation}
\begin{equation}\label{d16}
E'\in\mathcal{E}_{\omega}.
\end{equation}

By (\ref{d14}), (\ref{d15}), (\ref{d16}),
\begin{equation}\label{d17}
\mbox{${\rm Proj}_{\mathbb{T}^{d+1}}S$ is a semi-algebraic set of degree $<N^C$},
\end{equation}
and by Proposition \ref{p3.2},
\begin{equation}\label{d18}
{\rm mes}({\rm Proj}_{\mathbb{T}^{d+1}}S)<e^{-\frac{1}{2}N^{\sigma}}.
\end{equation}

Let $N_2=e^{(\log N)^{2}}$,
\begin{equation}\label{d19}
\mathcal{B}_{N}=\{\omega\in\mathbb{T}\mid(\omega,T^{j}x_{0})\in {\rm Proj}_{\mathbb{T}^{d+1}}S_N, \ \exists|j|\sim N_2\}.
\end{equation}
By (\ref{d17}), (\ref{d18}), (\ref{d19}), using Lemma \ref{l4.6}, ${\rm mes}\mathcal{B}_{N}<N_2^{-c},c>0$.
Let
\begin{equation}\label{d20}
\mathcal{B}=\bigcap_{N_{0}}\bigcup_{N>N_{0}}\mathcal{B}_{N},
\end{equation}
then ${\rm mes}\mathcal{B}=0$. We restrict $\omega\notin\mathcal{B}$.

If $\omega\notin\mathcal{B}_N$, we have for all $|j|\sim N_2, \ (\omega,T^{j}x_{0})\notin {\rm Proj}_{\mathbb{T}^{d+1}}S_N$,
by (\ref{d13}),
\begin{equation}\label{d21}
  |G_{[j-N,j+N]}(E,x_0)(m,n)|<e^{N^{1-}-\frac{1}{100}|m-n|\chi_{|m-n|>\frac{N}{10}}}.
\end{equation}
Let $\Lambda=\bigcup\limits_{\frac{1}{4}N_2<j<2N_2}[j-N,j+N]\supset[\frac{1}{4}N_2,2N_2]$, by Lemma \ref{l5.1}, we deduce from (\ref{d21}) that
\begin{equation}\label{d22}
  |G_{\Lambda}(E,x_0)(m,n)|<e^{-\frac{1}{200}|m-n|}, \quad |m-n|>\frac{N_2}{10},
\end{equation}
and therefore
\begin{equation}\label{d23}
 |\xi_j|<e^{-\frac{1}{1000}|j|},\quad  \frac{1}{2}N_2\leq j\leq N_2 .
\end{equation}

Since $\omega\notin\mathcal{B}$, by (\ref{d20}), there is some $N_0>0$, such that for all $N\geq N_0,\omega\notin\mathcal{B}_N$.
So, (\ref{d23}) holds for $j\in\bigcup\limits_{N\geq N_0}[\frac{1}{2}e^{(\log N)^{2}},e^{(\log N)^{2}}]=[\frac{1}{2}e^{(\log N_0)^{2}},\infty)$.
This proves (\ref{d6}) for $j>0$, similarly for $j<0$. Hence
Theorem \ref{t5.2} follows.
\end{proof}

\subsection*{Acknowledgment}
The authors are very grateful to Dr. Y. Shi for the valuable suggestions.
This paper was supported by  National Natural
Science Foundation of China (No. 11790272 and No. 11771093).

\end{document}